\newtheorem{proposition}{Proposition}[section]
\newtheorem{theorem}[proposition]{Theorem}
\newtheorem{corollary}[proposition]{Corollary}
\newtheorem{lemma}[proposition]{Lemma}
\newtheorem{remark}[proposition]{Remark}
\newcommand{\nc}{\newcommand}
\nc{\I}{{\bf 1}}
\nc{\bG}{{G}}
\nc{\bS}{{\mathbf S}}
\nc{\bN}{{\mathbf N}}
\nc{\bM}{{\mathbf M}}
\nc{\cB}{{\mathcal B}}
\nc{\cG}{{\mathcal G}}
\nc{\cS}{{\mathcal S}}
\nc{\cM}{{\mathcal M}}
\nc{\R}{{\mathbb R}}
\nc{\N}{{\mathbb N}}
\nc{\Z}{{\mathbb Z}}
\DeclareMathOperator{\supp}{supp}
\nc{\BP}{\mathbb{P}}
\nc{\BE}{\mathbb{E}}
\nc{\BQ}{\mathbb{Q}}
\numberwithin{equation}{section}
\begin{document} 

\renewcommand{\thefootnote}{\fnsymbol{footnote}}
\author{{\sc G\"unter Last\thanks{Karlsruhe Institute of Technology,
    Germany. E-mail: \texttt{guenter.last@kit.edu}}, 
Wenpin Tang\thanks{University of Berkeley, 
    USA. E-mail: \texttt{wenpintang@stat.berkeley.edu}}
and Hermann Thorisson\thanks{University of Iceland, Iceland. 
E-mail: \texttt{hermann@hi.is}}}}
\title{Transporting random measures on the line\\
and embedding excursions into Brownian motion}
\date{\today}
\maketitle

\begin{abstract} 
\noindent 
We consider two jointly stationary and ergodic random
measures $\xi$ and $\eta$ on the real line $\R$ with equal intensities. 
An allocation is an equivariant random mapping from
$\R$ to $\R$. We give sufficient and partially necessary conditions
for the existence of allocations transporting $\xi$ to $\eta$.
An important ingredient of our approach is 
a transport kernel balancing $\xi$ and $\eta$, provided
these random measures are mutually singular.
In the second part of the paper, we apply this result to
the path decomposition of a two-sided Brownian motion
into three independent pieces: a time reversed Brownian motion on $(-\infty,0]$, 
an excursion distributed according to a conditional 
It\^{o} measure and a Brownian motion starting after this excursion.
An analogous result holds for Bismut's excursion measure. 
\end{abstract}


\bigskip

\noindent
{\bf Keywords:} stationary random measure, point process,
allocation, invariant transport, Palm measure, shift-coupling,
Brownian motion, excursion theory

\noindent
{\bf AMS MSC 2010:} Primary 60G57, 60G55;  
Secondary 60G60.

\section{Introduction}

The  following extra head problem for a two-sided  sequence of  i.i.d.\ 
tosses of a fair coin was 
formulated and solved by Tom Liggett in the 2002 paper \cite{L02}:
can you shift 
the origin  to one of the heads 
in such a way that you have two independent one-sided  i.i.d.\ sequences,
one  to the left and one to the right of that head? 
Note that if you shift 
to the first head at or after the origin, 
then the sequence to the left of that head will be biased: the distance to the first head  to the left 
will {\em not} be geometric, it will be the sum of two independent geometric variables  minus~$1$ 
(this is the waiting time paradox).
Liggett's solution was both surprising and simple: 
If there is a head at the origin, do not shift. If there is a tail at the origin,
shift forward until you have equal number of heads and tails.
Then you are at a head and it is an extra head.

Here we shall consider the analogous problem of
finding extra excursions in a two-sided standard Brownian motion $B = (B_s)_{s\in\R}$.
Let $A$ be a measurable set of excursions (away from zero) having positive 
finite 
It\^{o} excursion measure. By an $A${\em -excursion} we mean an excursion 
 that is distributed according to 
the  It\^{o} excursion measure conditioned on $A$. 
An~{\em extra} $A$-excursion (starting at a random time $T$ and of length $X$) is an 
$A$-excursion $(B_{T+s})_{0\leq s < X}$
with the property that it is independent of $(B_{T-s})_{s\ge 0}$
and $(B_{T+X+s})_{s\ge 0}$ which are independent and both one-sided 
standard Brownian motions; we also call this {\em unbiased} embedding of the excursion. 

It is readily checked that  
there is a.s.\ a first excursion to the right of the origin with property
$A$ and that this excursion 
is an $A$-excursion. But it is not an {\em extra} $A$-excursion. 
Indeed, 
the Brownian motion $B$ splits a.s.\ into a two-sided
sequence of independent segments  such that: every odd-numbered
segment is an $A$-excursion;  every even-numbered segment except the one enumerated
$0$ is a standard Brownian motion starting from zero running until the first time that
an $A$-excursion occurs; but the segment enumerated $0$
consists of two independent segments of that type.  In addition to
this, the origin of $B$ is placed at
random in the segment enumerated $0$ according to the local time at
zero of the segment. More details on this picture are given in Sections 
\ref{firstexcursion} and \ref{extra}; see in particular 
Figure 2, Remark \ref{pointstat} and Remark \ref{rfirstexcursion}.

In order to find an extra $A$-excursion we need to extend 
the general  allocation (transport) theory for random measures that grew out of Liggett's 
original paper.
The shift 
described in the first paragraph,
when applied to all the tails, generates an {\em allocation} from 
tails  to heads; the allocation is {\em balancing} 
because it transports the counting measure for tails ({\em source}) 
into the counting measure for heads ({\em target}). 
In the recent papers \cite{LaMoeTho12,MortersRedl16} and \cite{PitmanTang2015}, 
balancing allocations for {\em diffuse} random measures on the line
were used for unbiased Skorohod embedding 
and for unbiased 
embedding (by a random space-time shift) of the Brownian bridge. 
In this paper we shall allow the target measure to be non-diffuse. 
This is needed because the target measure associated with the
$A$-excursions is a point process.

Before proceeding further, we need some notation.
Let $\xi$ and $\eta$ be two jointly stationary and ergodic random
measures on $\R$ with finite intensities 
$\lambda_\xi:=\BE\xi[0,1]$ and $\lambda_\eta:=\BE\eta[0,1]$.
An {\em allocation}
is a random (jointly measurable) mapping $t\mapsto \tau(t)$ from
$\R$ to $\R\cup\{\infty\}$ which is equivariant
under joint shifts of $t$ and the underlying randomness;
see \eqref{allocation} for an exact definition.
An allocation is said to {\em balance} the {\em source} \,$\xi$\, and the\, {\em target} 
$\eta$ if $\BP(\xi(\{s\in\R:\tau(s)=\infty\})>0)=0$
and the image measure of $\xi$ under $\tau$  is $\eta$; that is, 
\begin{align}\label{bal123}
\int \I\{\tau(s)\in C\}\,\xi(ds)=\eta(C),\quad C\in\cB(\mathbb{R}),\, \BP\text{-a.e.}
\end{align} 
The balancing property \eqref{bal123} implies easily that
\begin{align}\label{i2}
\lambda_\xi= \lambda_\eta.
\end{align}
The random variable $\tau(0)$ can be used to construct
a {\em shift-coupling} (see \cite{AldousThor,Thor96,Thor00}) 
of the Palm versions of $\xi$ and $\eta$; see \cite{Mecke2,HP05,LaTho09}.

In this paper, we prove that
if the source $\xi$ is diffuse,
and if the source and the target are mutually singular,
then the equality \eqref{i2} is not only necessary but also sufficient
for the existence of a balancing allocation. 

\begin{theorem}\label{tmain2} Assume that $\xi$ and $\eta$
are mutually singular jointly stationary and ergodic random measures on $\R$ such that
$\xi$ is diffuse and $\lambda_\xi=\lambda_\eta$. Then the allocation
$\tau$ defined~by 
\begin{align}\label{matching}
\tau(s):=\inf\{t> s \colon \xi[s,t]\le \eta[s,t]\},\quad s\in\R,
\end{align}
balances $\xi$ and $\eta$.
\end{theorem}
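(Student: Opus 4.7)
The plan is to analyze $\tau$ through the cadlag ``imbalance process''
\[
F(t) := \xi[0,t] - \eta[0,t], \quad t \ge 0,
\]
extended in the usual way to $t<0$. Since $\xi$ is diffuse, $F$ has no upward jumps: it grows continuously at rate $\xi$ and jumps downward only at atoms of $\eta$, with $F(t^-)-F(t)=\eta\{t\}$. A short computation using $\xi\{s\}=0$ gives $\xi[s,t] - \eta[s,t] = F(t) - F(s^-)$, so \eqref{matching} rewrites as
\[
\tau(s) = \inf\{t > s : F(t) \le F(s^-)\}.
\]

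First I would establish finiteness of $\tau$. Birkhoff's theorem gives $F(t)/t \to \lambda_\xi - \lambda_\eta = 0$ a.s., but the sharper statement $\liminf_{t\to\infty} F(t) = -\infty$ (and, symmetrically, $\limsup_{t\to -\infty} F(t) = +\infty$) a.s.\ is what is needed. The argument is by contradiction: were $\liminf F$ a.s.\ finite, ergodicity would make it a deterministic constant $c$, and the cocycle identity $F\circ\theta_h \stackrel{d}{=} F$ together with the shift-invariance of $\liminf$ at infinity would force $F(h)\equiv 0$ a.s., i.e.\ $\xi=\eta$, contradicting mutual singularity (when $\lambda_\xi > 0$; the case $\lambda_\xi=0$ is vacuous). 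Consequently $\tau(s)<\infty$ for every $s$ a.s., hence $\xi$-a.e., $\BP$-a.s. Equivariance of $\tau$ follows at once from the cocycle identity since the comparison $F(t)\le F(s^-)$ is invariant under joint shifts.

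The core step is the balancing property. By a monotone-class argument it suffices to show that for every $a<b$,
\[
\int \I\{\tau(s)\in(a,b]\}\,\xi(ds) = \eta((a,b]) \quad \BP\text{-a.s.}
\]
The plan is a pathwise ``level-sweeping'' argument on $F$. For each atom $t$ of $\eta$ with mass $m:=\eta\{t\}$, the preimage $\tau^{-1}(\{t\})$ is exactly the set of sources $s<t$ at which $F(s^-)$ is a strict lower bound of $F$ on $(s,t)$ and lies in the jump interval $[F(t),F(t^-))$, which has length $m$. Because $\xi$ is diffuse and $F$ climbs continuously at rate $\xi$ on its continuous part, the map $s\mapsto F(s^-)$ traces out this interval bijectively on $\tau^{-1}(\{t\})$, yielding $\xi(\tau^{-1}(\{t\})) = F(t^-) - F(t) = \eta\{t\}$. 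A parallel Fubini-style disintegration across the levels of $F$ handles the diffuse part of $\eta$, giving $\xi(\tau^{-1}(A)) = \eta(A)$ for every Borel $A$.

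The main obstacle is this pathwise level-sweeping identity itself. Making it rigorous requires careful bookkeeping of the strict vs.\ non-strict inequalities in \eqref{matching}, of the alternation between continuous climbing (from $\xi$) and downward jumps (from atoms of $\eta$) of $F$, and of the disintegration of $\xi$ along level sets of $F$. The mutual singularity assumption enters crucially here: it guarantees that on $\xi$-typical intervals $F$ is strictly increasing, so that the level-to-source map $\ell\mapsto s(\ell) := \sup\{s : F(s^-)\le \ell,\ F>\ell \text{ on a right-neighbourhood of }s\}$ is a genuine $\xi$-length-preserving bijection between the relevant levels and sources.
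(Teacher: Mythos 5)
Your reduction to the imbalance process $F(t)=\xi[0,t]-\eta[0,t]$ and the rewriting $\tau(s)=\inf\{t>s: F(t)\le F(s^-)\}$ are fine, but both pillars of the argument have genuine gaps. First, the claimed dichotomy behind finiteness is false: under the hypotheses of the theorem one can perfectly well have $\liminf_{t\to\infty}F(t)>-\infty$ with $\xi\ne\eta$. Take $\xi$ to be Lebesgue measure and $\eta=\sum_{k\in\Z}\delta_{U+k}$ with $U$ uniform on $[0,1)$: these are jointly stationary, ergodic, mutually singular, $\xi$ is diffuse, $\lambda_\xi=\lambda_\eta=1$, yet $F$ stays in a bounded band, so $\liminf_{t\to\infty}F(t)=U-1$ is finite (and the theorem nevertheless holds, with $\tau(s)$ the next lattice point). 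The deduction ``finite $\liminf$ plus ergodicity forces $F(h)\equiv 0$'' is unsound because $L:=\liminf_{t\to\infty}F(t)$ is not shift-invariant; it satisfies the cocycle relation $L\circ\theta_h=L-F(h)$, and $L\overset{d}{=}L-F(h)$ carries no contradiction (in the example $L\circ\theta_h$ is again uniform minus one). So your proposal does not establish $\xi$-a.e.\ finiteness of $\tau$, and what is actually needed is the weaker but still non-trivial statement that for $\xi$-a.e.\ $s$ the path $F$ returns (weakly) below the level $F(s^-)$.

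Second, the ``level-sweeping'' identity $\xi(\tau^{-1}\{t\})=\eta\{t\}$, together with its analogue for the diffuse part of $\eta$, is not a pathwise triviality: it holds only if before each target time $t$ the process $F$ has already visited every level in $[F(t),F(t^-))$ (and, for finiteness, the time-reflected analogue), and ruling out a positive-probability deficit of unvisited levels is exactly where stationarity, ergodicity, equal intensities and mutual singularity must do real work. Your proposal invokes stationarity only in the flawed $\liminf$ argument and otherwise asserts the bijection (indeed you acknowledge that making it rigorous is ``the main obstacle''), so the heart of the theorem is missing; also the claim that mutual singularity makes $F$ ``strictly increasing on $\xi$-typical intervals'' is not correct as stated. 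For comparison, the paper does not attempt such a direct pathwise proof: it deduces Theorem \ref{tmain2} from Theorem \ref{tmain1}, which is proved by a deterministic time change stretching the axis at the atoms so that both measures become diffuse, a length-biasing lemma (Lemma \ref{stat}) restoring stationarity and ergodicity after the time change, and an appeal to Theorem \ref{LMT5.1} of \cite{LaMoeTho12}, where the recurrence/balancing difficulty for diffuse pairs is resolved. A self-contained path argument along your lines would essentially have to reprove that result; as written, the proposal does not.
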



In order to establish Theorem \ref{tmain2},
we prove an even more general result, Theorem \ref{tmain1},
which does not require $\xi$ to be diffuse;
we construct a balancing {\em transport kernel},
provided that $\lambda_\xi= \lambda_\eta$ and that $\xi$ and $\eta$
are mutually singular. 
This 
relies heavily on Theorem 5.1 from \cite{LaMoeTho12}, 
a precursor of Theorem \ref{tmain2}
where both $\xi$ and $\eta$ are assumed to be diffuse.

Transports of  random measures and point processes have been studied
on more general phase spaces.
For further background we refer to 
\cite{Thor96,L02,HL01,HP05,LaTho09,LaMoeTho12, Huesmann16}. 
The existence of an extra head was implicit in an abstract group         
result in \cite{Thor96}, but in that paper there was no hint at an explicit    
pathwise method of finding an extra head.  
In \cite{L02,HL01}, the sources are counting and Lebesgue measures 
and the targets are Bernoulli and Poisson processes. 
In~\cite{HP05}, the source is Lebesgue measure and the target is a 
simple point process, in particular a Poisson process. 
In~\cite{LaMoeTho12}, the source and target are both diffuse random measures on the line,
in particular local times of Brownian motion. 
In Theorem~\ref{tmain2} above, the source is  diffuse but the target is general, 
and according to Theorem \ref{tmain1} below (see Remark~\ref{U}),
a balancing allocation is obtained through external randomization
in the case where both source and target are general. 
The paper \cite{LaTho09} develops a general transport theory 
for random measures
(on Abelian groups) with focus on transport kernels rather than only allocations. 
The allocations studied in the present paper have a certain 
property of right-stability; see \cite[Section 7]{LaMoeTho12}.
The mass of the source prefers to be allocated as close as possible.
The paper \cite{Huesmann16} pursues a different approach, based on
the minimization of expected transport costs (defined in the Palm  sense).
It is shown that if the expected transport cost is finite and the source is absolutely
continuous, then there exists
a unique optimal allocation that can be
locally approximated with solutions to the classical Monge problem 
(see \cite{Villani08}).

The paper is organised as follows. Section~\ref{statm} gives preliminaries 
on random measures, transport kernels and allocations.
Section~\ref{skernel} provides the main transport result,
Theorem~\ref{tmain1}. 
We then turn to the application to Brownian motion. 
Section~\ref{Palm} contains the key Palm and shift-coupling 
result for the embedding, Proposition~\ref{lstt}. 
Section~\ref{firstexcursion} is devoted to excursion theory  
and discusses the embedding problem.  
Section~\ref{extra} applies Proposition~\ref{lstt}  
to unbiased embedding of conditional It\^{o} measures. 
We also apply this proposition to Bismut's excursion measure,
a close relative of It\^{o}'s measure.

\section{Preliminaries}\label{statm}

Let $(\Omega,\mathcal{F},\BP)$ be a $\sigma$-finite measure space
with associated integral operator $\BE$.
A {\em random measure} (resp.\ {\em point process}) $\xi$ on $\R$ (equipped
with its Borel $\sigma$-field $\cB(\R)$) is a kernel 
from $\Omega$ to $\R$ such that $\xi(\omega,C)<\infty$
(resp.\ $\xi(\omega,C)\in\N_0$)
for $\BP$-a.e.\ $\omega$ and all compact $C\subset\R$.
We assume that
$(\Omega,\mathcal{F})$ is equipped with a {\em measurable flow}
$\theta_s\colon\Omega \to \Omega$, $s\in \R$. This is a family
of 
mappings such that $(\omega,s)\mapsto \theta_s\omega$ 
is measurable, $\theta_0$ is the identity on $\Omega$ and
\begin{align}\label{flow}
  \theta_s \circ \theta_t =\theta_{s+t},\quad s,t\in \R,
\end{align}
where $\circ$ denotes composition.

A kernel $\xi$ from $\Omega$ to $\R$ is said to be {\em invariant}  (or {\em flow-adapted})  if
\begin{align}\label{adapt}
\xi(\theta_t\omega,C-t)=\xi(\omega,C),\quad C\in\mathcal{B}(\mathbb{R}),t\in\R,
\BP\text{-a.e.\ $\omega\in\Omega$}.
\end{align}
We assume that the measure $\BP$ is {\em stationary}; that is
$$
\BP\circ\theta_s=\BP,\quad s\in\R,
$$
where $\theta_s$ is interpreted as a mapping from $\mathcal{F}$ to $\mathcal{F}$
in the usual way:
$$
\theta_sA:=\{\theta_s\omega:\omega\in A\},\quad A\in\mathcal{F},\, s\in\R.
$$
The {\em invariant} $\sigma$-field $\mathcal{I}\subset \mathcal{F}$ is the
class of all sets $A\in\mathcal{F}$ satisfying $\theta_sA=A$ for all $s\in\R$.
We also assume that $\BP$ is {\em ergodic}; that is for any
$A\in\mathcal{I}$, we have either $\BP(A)=0$ or $\BP(A^c)=0$.

\begin{remark}\label{r1}\rm The assumption of ergodicity has been made
for simplicity and can be relaxed. The assumption $\lambda_\xi= \lambda_\eta$
has then to be replaced by
$$
\BE[\xi[0,1]\mid \mathcal{I}]=\BE[\eta[0,1]\mid \mathcal{I}],\quad \BP\text{-a.e.}
$$
We refer to \cite{LaTho09} for more detail on this point.
\end{remark}


A {\em transport kernel} is a  sub-Markovian kernel
$K$ from $\Omega\times\R$ to $\R$.
A  transport kernel is {\em invariant} if 
\begin{align*}
K(\theta_s\omega,0,C-s)=K(\omega,s,C),\quad \quad  s\in\R,\,C\in\cB(\R),\, \text{$\BP$-a.e.\ $\omega\in\Omega$}.
\end{align*}
An {\em allocation} \cite{HP05,LaTho09} is a measurable mapping
$\tau\colon\Omega\times\R\rightarrow\R\cup\{\infty\}$ that is {\em equivariant}
in the sense that
\begin{align}\label{allocation}
\tau(\theta_t\omega,s-t)=\tau(\omega,s)-t,
\quad  s,t\in\R,\, \text{$\BP$-a.e.\ $\omega\in\Omega$}.
\end{align}
Any allocation
defines a transport kernel $K$ by $K(s,\cdot)=\I\{\tau(s)<\infty\}\delta_{\tau(s)}$. 


\begin{remark}\label{r2}\rm In \cite{LaTho09}, a transport
kernel $K$ is Markovian; that is 
$K(\omega,s,\R)=1$ for all $s\in\R$. We find it convenient to allow
for $K(\omega,s,\R)<1$ on an exceptional set of points $(\omega,s)$. In the same
spirit, we do not assume an allocation to take on only finite values,
as it is the case in \cite{LaTho09,LaMoeTho12}.
\end{remark}

Let $\xi$ and $\eta$ be random measures on $\R$. We say that
a transport kernel $K$ {\em balances} $\xi$ and $\eta$ if 
$K(\omega,s,\R)=1$ a.e.\ w.r.t.\ the measure $\xi(\omega,ds)\BP(d\omega)$ and
$K$ transports $\xi$ to $\eta$, that is, 
\begin{align}\label{bal}
\int K(s,\cdot)\,\xi(ds)=\eta, \quad \BP\text{-a.e.}
\end{align}
If $\tau$ is an allocation such that the associated transport kernel $K$ 
balances $\xi$ and $\eta$, then we say that $\tau$ balances $\xi$ and $\eta$.

\section{Balancing mutually singular random measures}\label{skernel}

Throughout this section, let  $\xi$ and $\eta$  
be two invariant random measures defined on the $\sigma$-finite measure
space $(\Omega,\mathcal{F},\BP)$. We recall from the previous section that
$\BP$ is assumed to be stationary and ergodic under a given flow.
In particular, the joint distribution of $\xi$ and $\eta$ is stationary and ergodic.
We shall construct a transport kernel balancing $\xi$ and $\eta$. 
To this end, we use the following result 
from \cite[Theorem 5.1]{LaMoeTho12}
in a crucial way.

\begin{theorem}\label{LMT5.1} Assume that $\xi$ and $\eta$ are mutually
singular diffuse invariant random measures such that $\lambda_\xi= \lambda_\eta$.
Then the mapping $\tau$ defined by \eqref{matching}
is an allocation balancing $\xi$ and $\eta$. 
\end{theorem}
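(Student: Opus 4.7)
I would prove the theorem in three stages: (i) equivariance and measurability of $\tau$, (ii) almost sure finiteness of $\tau$ on $\supp\xi$, and (iii) the balancing identity \eqref{bal123}. Stage (i) is routine. Because $\xi$ and $\eta$ are flow-invariant, the defining condition $\xi[s,t] \le \eta[s,t]$ transforms equivariantly, giving $\tau(\theta_u\omega, s-u) = \tau(\omega, s) - u$ a.s. Joint measurability of $(\omega,s) \mapsto \tau(\omega, s)$ follows from the joint continuity of $(s,t) \mapsto (\xi[s,t], \eta[s,t])$ on $\{s \le t\}$, secured by the diffuseness of both measures.

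For stage (ii), I would introduce the continuous signed primitive $F(t) := \xi((0,t]) - \eta((0,t])$ for $t \ge 0$, with the analogous convention for $t < 0$, so that $F(t) - F(s) = \xi[s,t] - \eta[s,t]$ and $\tau(s) = \inf\{t > s : F(t) \le F(s)\}$. Finiteness of $\tau(s)$ reduces to the recurrence property $\liminf_{t \to \infty} F(t) = -\infty$ (and symmetrically $\limsup_{t \to \infty} F(t) = +\infty$) a.s. Ergodicity together with $\lambda_\xi = \lambda_\eta$ yields $F(t)/t \to 0$ by Birkhoff, but this alone is not enough. Suppose, for contradiction, that $\inf_{t > 0} F(t) > -\infty$ on a set of positive probability; ergodicity upgrades this to probability one, and one then derives a contradiction via a Hopf-type maximal ergodic argument, which uses crucially the mutual singularity of $\xi$ and $\eta$ to rule out that the apparent floor of $F$ is maintained by cancellation between a common diffuse part of the two measures.

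Stage (iii) is then obtained by constructing the backward companion $\sigma(t) := \sup\{s < t : F(s) \ge F(t)\}$, which is well-defined $\eta$-a.e.\ by the symmetric recurrence. A direct excursion analysis of the continuous path $F$ shows that the pairs $(s, \tau(s))$ (indexed by $\xi$) and $(\sigma(t), t)$ (indexed by $\eta$) describe the same relation, i.e., $\tau$ and $\sigma$ are mutual inverses off a negligible set. Specializing the induced change of variables to the indicator $\I\{t \in C\}$ yields $\int \I\{\tau(s) \in C\}\,\xi(ds) = \eta(C)$, which is \eqref{bal123}. A parallel route would be to establish a refined Campbell--Mecke identity for $\tau$ directly from the stationary structure, bypassing the explicit construction of $\sigma$.

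The analytical heart of the proof, and the step I expect to be the main obstacle, is the recurrence of $F$ in stage (ii). Without mutual singularity the greedy map \eqref{matching} can genuinely fail to balance $\xi$ and $\eta$, because shared diffuse mass causes the matching to ``short-circuit'' locally; extracting the quantitative recurrence from the qualitative disjointness of $\supp\xi$ and $\supp\eta$ is where the real work lies. Once recurrence is in hand, the pushforward identity is nearly tautological, being an avatar of the bijection between down- and up-crossings of a continuous path.
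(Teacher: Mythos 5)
First, a point of reference: the paper does not prove Theorem \ref{LMT5.1} at all; it is imported verbatim from \cite[Theorem 5.1]{LaMoeTho12} and used as a black box. So the comparison can only be between your sketch and what a correct proof must accomplish.

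Your proposal has a genuine gap at the step you yourself identify as the analytical heart. The recurrence claim in stage (ii) --- that $\liminf_{t\to\infty}F(t)=-\infty$ (and $\limsup_{t\to\infty}F(t)=+\infty$) almost surely, to be established by contradiction from $\inf_{t>0}F(t)>-\infty$ --- is simply false under the hypotheses, so no Hopf-type argument can deliver it. Concretely, let $\xi$ be Lebesgue measure and let $\eta=\theta_U\eta_0$, where $\eta_0=\sum_{n\in\Z}\kappa(\cdot-n)$ is the periodic tiling of the Cantor measure $\kappa$ and $U$ is uniform on $[0,1)$; this pair is jointly stationary and ergodic, both measures are diffuse and mutually singular, and both have intensity $1$. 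Here $\eta(t,t+1]=1$ for every $t$, so $F(t)=\xi(0,t]-\eta(0,t]$ is periodic, in particular bounded, and $\liminf_{t\to\infty}F(t)>-\infty$ a.s.\ --- yet the conclusion of Theorem \ref{LMT5.1} holds (and must hold, since the present paper's Theorem \ref{tmain2} relies on it). What is actually needed is only the much weaker statement that for $\xi$-a.e.\ $s$ there exists $t>s$ with $F(t)\le F(s)$, and this cannot be obtained by the route you propose; note also that your argument never genuinely uses $\lambda_\xi=\lambda_\eta$ beyond $F(t)/t\to0$, which you concede is insufficient, nor does it use mutual singularity in any identified way.

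The second weak point is the claim in stage (iii) that, once finiteness is known, the identity $\int\I\{\tau(s)\in C\}\,\xi(ds)=\eta(C)$ is ``nearly tautological'' via the backward map $\sigma$. Injectivity of $\tau$ on $\{\tau<\infty\}$ is indeed easy (if $\tau(s_1)=\tau(s_2)$ with $s_1<s_2$ then continuity of $F$ forces $F(s_1)=F(s_2)$ while the definition of $\tau(s_1)$ forces $F(s_2)>F(s_1)$), but the balancing identity is exactly the statement that no $\xi$-mass remains unmatched and no $\eta$-mass remains unfilled, and asserting that $\sigma$ is finite and inverse to $\tau$ for $\eta$-a.e.\ $t$ is just the dual of the unproved finiteness statement --- you have moved the difficulty, not resolved it. A workable scheme is rather: (a) show the pushforward $\int\I\{\tau(s)\in\cdot\}\,\xi(ds)$ restricted to $\{\tau<\infty\}$ is dominated by $\eta$ (this is where diffuseness and mutual singularity do real work, via the no-overshoot identity $F(\tau(s))=F(s)$ and the laminar structure of the intervals $[s,\tau(s)]$); (b) rule out the coexistence of unmatched source mass and unfilled target mass; and (c) conclude equality from the fact that two invariant random measures, one dominated by the other and with equal finite intensities, must coincide --- this is the precise place where $\lambda_\xi=\lambda_\eta$ and ergodicity (or the conditional intensity condition of Remark \ref{r1}) enter. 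As it stands, your sketch does not contain the ingredients for (a)--(c), and its central step (recurrence of $F$) is unprovable.
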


For any $u\in[0,1]$ we define a mapping $\tau^u\colon\Omega\times\R\rightarrow[0,\infty]$  by
\begin{align}\label{umatching}
\tau^u(s):=\inf\{t> s \colon u\xi\{s\}+\xi(s,t)\le \eta[s,t]\},\quad s\in\R,
\end{align}
where $\inf\emptyset:=\infty$. 


\begin{theorem}\label{tmain1} Assume that $\xi$ and $\eta$ 
are mutually singular invariant random measures on $\R$ such that
$\lambda_\xi= \lambda_\eta$. Then
\begin{align}\label{kernelmass}
K(s,C):=\int^1_0\I\{\tau^u(s)\in C \}\,du,\quad s\in\R,\,C\in\cB(\R),
\end{align}
defines a transport kernel balancing $\xi$
and $\eta$. 
\end{theorem}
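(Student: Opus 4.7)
The plan is to reduce Theorem \ref{tmain1} to the diffuse case of Theorem \ref{LMT5.1}. The parameter $u \in [0,1]$ in \eqref{umatching} is designed to resolve atoms of $\xi$: when $\xi$ is diffuse, $\tau^u$ is independent of $u$ and coincides with the allocation \eqref{matching} already covered by Theorem \ref{LMT5.1}, so the $u$-average in \eqref{kernelmass} only does substantive work for the atomic part of $\xi$. The lifting idea is to stack each atom of $\xi$ vertically into a diffuse two-dimensional random measure and invoke the 2D analogue of Theorem \ref{LMT5.1}.

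I would first dispose of the routine items. Joint measurability of $(\omega,s,u)\mapsto\tau^u(\omega,s)$ follows from the right-continuity of $t\mapsto\eta([s,t])$ and a countable-infimum argument, and the equivariance $\tau^u(\theta_r\omega,s-r)=\tau^u(\omega,s)-r$ for each fixed $u$ is inherited from the invariance of $\xi$ and $\eta$ via a change of variables in \eqref{umatching}. Integrating in $u$ then gives that $K$ is an invariant sub-Markovian transport kernel with $K(\omega,s,\R)\le 1$.

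The substance is the balancing identity $\int K(s,\cdot)\,\xi(ds)=\eta$. I would work on the extended phase space $\R\times[0,\infty)$ equipped with the purely horizontal flow $(s,y)\mapsto(s-t,y)$, and introduce
$$\hat\xi := \xi^{\mathrm{diff}}\otimes \delta_0 + \sum_i \delta_{s_i}\otimes \I_{[0,m_i]}\,dy,\qquad \hat\eta := \eta\otimes \delta_0,$$
where $(s_i,m_i)$ enumerate the atoms of $\xi$ together with their masses. These are horizontally invariant, 2D-diffuse (no 2D point atoms), mutually singular, and of equal horizontal intensity $\lambda_\xi=\lambda_\eta$. Transferring the matching rule of Theorem \ref{LMT5.1} to this setting via the lexicographic ordering on $\R\times[0,\infty)$, on the vertical fiber above an atom $s_i$ the condition reads $(m_i-y)+\xi((s_i,t))\le\eta([s_i,t])$, which is exactly \eqref{umatching} under the substitution $u=(m_i-y)/m_i$ (using $\eta\{s_i\}=0$, which follows from mutual singularity). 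Consequently the balancing allocation for $(\hat\xi,\hat\eta)$ restricts on each fiber to $y\mapsto(\tau^{(m_i-y)/m_i}(s_i),0)$; applying Fubini on each interval $[0,m_i]$ with the substitution $u=1-y/m_i$ converts the 2D balancing identity into $\int K(s,C)\,\xi(ds)=\eta(C)$ in $\R$.

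The main obstacle is justifying Theorem \ref{LMT5.1} in this horizontal-flow product setting. The proof in \cite{LaMoeTho12} is essentially measure-theoretic and should carry over with the lexicographic ordering replacing the usual order on $\R$; alternatively one can exhibit an invariant one-dimensional reparameterization of $\R\times[0,\infty)$ by arc-length along the support of $\hat\xi$, reducing the extended problem to \cite[Theorem~5.1]{LaMoeTho12} verbatim. A secondary delicacy is the fiberwise identification of the 2D allocation with the formula \eqref{umatching}, which rests on the vertical invariance of $\hat\eta$ and on the fact that atoms of $\eta$ cannot coincide with atoms of $\xi$.
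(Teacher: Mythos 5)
Your guiding idea---spread the atoms of $\xi$ so that the problem reduces to the diffuse case of Theorem \ref{LMT5.1}---is exactly the paper's idea, which implements it by the time change $\zeta$ that stretches the line at each atom. But as written your reduction has two genuine gaps. First, your lifted target is not diffuse: $\hat\eta=\eta\otimes\delta_0$ keeps every atom of $\eta$ as a genuine point atom of the planar measure, and in the situations the theorem is meant for (e.g.\ $\eta$ a point process, as in the excursion application) this is the typical case. A ``2D analogue of Theorem \ref{LMT5.1}'' would, like the one-dimensional statement, require \emph{both} measures diffuse, so it cannot be applied to $(\hat\xi,\hat\eta)$ as you construct them. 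You would have to spread the atoms of $\eta$ as well (the paper's $\zeta$ stretches at atoms of both $\xi^d$ and $\eta^d$), and then your fiberwise identification of the lifted allocation with \eqref{umatching} needs the additional case analysis in which the matched point falls inside a spread $\eta$-atom; this is precisely what the paper's Lemmas \ref{l13} and \ref{l14} do, and it is not automatic.

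Second, and more seriously, the invocation of the diffuse-case theorem in the lifted setting is unjustified. There is no citable two-dimensional version of Theorem \ref{LMT5.1} with a lexicographic order, and your fallback---an invariant arc-length reparameterization along the support of $\hat\xi$---is in substance the paper's time change; but the reparameterized pair (the paper's $(\xi^*,\eta^*)$) is anchored at the origin and is \emph{not} jointly stationary under $\BP$, so \cite[Theorem 5.1]{LaMoeTho12} cannot be applied ``verbatim''. Repairing this is the technical heart of the paper's proof: one passes to the length-biased measure $d\BP^*=S_1\,d\BP$ with $S_1=1+\xi^d[0,1)+\eta^d[0,1)$, shifts by $US_1$ with $U$ uniform on $[0,1)$, proves that $\theta_{US_1}(\xi^*,\eta^*)$ is stationary and ergodic under $\BP^*$ with the correct intensities (Lemma \ref{stat}), applies Theorem \ref{LMT5.1} in that setting, and then transfers the balancing property back to $(\xi^*,\eta^*)$ under $\BP$ using the equivalence of $\BP$ and $\BP^*$ and a shift of the origin by $-US_1$. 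Nothing in your argument addresses this loss of stationarity, so the key step of your proposal is a claim, not a proof; the remaining bookkeeping (equivariance of $\tau^u$, the fiber/substitution computation turning the lifted balancing identity into $\int K(s,\cdot)\,\xi(ds)=\eta$) is fine and parallels the paper's Lemmas \ref{l1}--\ref{l3} and the final change-of-variables argument.
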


Since $\xi$ and $\eta$ are invariant, we obtain for all
$\omega$ outside a $\BP$-null set, for all
$s,t\in\R$ and for all $u\in[0,1]$ that
\begin{align*}
\tau^u(\theta_t\omega,s-t)&=\inf\{r> s-t \colon u\xi(\theta_t\omega,\{s-t\})
+\xi(\theta_t\omega,(s-t,r))\le \eta(\theta_t\omega,[s-t,r])\}\\
&=\inf\{r> s-t \colon u\xi(\omega,\{s\})
+\xi(\omega,(s,r+t))\le \eta(\omega,[s,r+t])\}\\
&=\tau^u(\omega,s)-t.
\end{align*}
Hence $\tau^u$ is an allocation and \eqref{kernelmass} defines a transport
kernel.

If $s\in\R$ satisfies $\xi\{s\}=0$, then $\tau^u(s)=\tau^0(s)$ does
not depend on $u\in[0,1]$. Therefore the kernel \eqref{kernelmass}
reduces on $\{s:\xi\{s\}=0\}$ to the allocation rule $\tau^0$, that is
\begin{align}\label{kernel2}
K(s,\cdot)=\I\{\xi\{s\}=0\}\delta_{\tau^0(s)}
+\I\{\xi\{s\}>0\}\int^1_0\I\{\tau^u(s)\in\cdot\}\,du.
\end{align}
If $\xi\{s\}>0$, then we may think of 
$u\xi\{s\}$ as a location picked at random in 
the mass of $\xi$ at $s$, before applying virtually the same rule 
$\tau$ as in Theorem \ref{LMT5.1}. If $\xi$ is diffuse, then
$\tau^0=\tau$, where $\tau$ is given by \eqref{matching}. Moreover,
the second term on the r.h.s.\ of \eqref{kernel2} vanishes in this case.
Thus, Theorem \ref{tmain1} implies Theorem \ref{tmain2}.

\begin{remark}\label{r8}\rm Theorem \ref{tmain2} is wrong without the assumption of 
mutual singularity. To see this, let $\xi'$ and $\eta'$ 
be mutually singular invariant random measures on $\R$ such that 
$\lambda_{\xi'}= \lambda_{\eta'}<\infty$. Assume that $\xi'$ is diffuse.
Let $\xi:=\xi'+\mu_1$
and $\eta:=\eta'+\mu_1$, where $\mu_1$ is Lebesgue measure on $\R$.
Then the allocation \eqref{matching} takes the form 
\begin{align*}
\tau(s):=\inf\{t> s \colon \xi'[s,t]\le \eta'[s,t]\},\quad s\in\R.
\end{align*}
By Theorem \ref{tmain2}, $\tau$ balances $\xi'$ and $\eta'$.
Therefore, $\tau$ balances $\xi$ and $\eta$ iff
\begin{align}\label{e3.49}
\int \I\{\tau(s)\in \cdot\}\,\mu_1(ds)=\mu_1,\quad \BP\text{-a.e.}
\end{align} 
This cannot be true in general. For a simple example let 
$\xi_0$ be Lebesgue measure on the set $A:=\cup_{i\in 3\Z}[i,i+2)$
and let $\eta_0$ be twice the Lebesgue measure on the set $\R\setminus A$.
Assume that $(\xi',\eta')=(\theta_U\xi_0,\theta_U\eta_0)$,
where $U$ is uniformly distributed on the interval $[0,3)$
and where we abuse notation by introducing
for any measure $\mu$ on $\R$ and $s\in\R$ a new measure
$\theta_s\mu$ by $\theta_s\mu:=\mu(\cdot +s)$.
For $s\in[U,U+2)$ we then have $\tau(s)=3U/2+3-s/2$, so that
$\int^{U+2}_U\I\{\tau(s)\in\cdot\}\,\mu_1(ds)$ is twice the
Lebesgue measure on $[U+2,U+3)$. Hence \eqref{e3.49} fails.
Note that \eqref{e3.49} fails,
even when modifying $\tau$ on the support of $\eta'$ in an arbitrary
manner.
\end{remark}

The proof of Theorem \ref{tmain1} relies 
on Theorem~\ref{LMT5.1} and the  six lemmas below. 
Of these lemmas all are deterministic except the final 
one, Lemma \ref{stat}. For convenience we assume that
$\xi$ and $\eta$ are locally finite everywhere on $\Omega$.
We shall use the decomposition $\xi=\xi^c+\xi^d$ of $\xi$
as the sum of its diffuse part $\xi^c$ and its purely discrete part $\xi^d$.
The formulas $\xi^c(dt):=\I\{\xi\{t\}=0\}\xi(dt)$ and
$\xi^d(dt):=\I\{\xi\{t\}>0\}\xi(dt)$ show that these
random measures are again invariant. Similar definitions
apply to $\eta$.

Theorem~\ref{LMT5.1} assumes $\xi$ and $\eta$ to be diffuse (and mutually singular). 
In order to obtain a diffuse source and target, 
we introduce a time change,
by stretching the real axis at the position of an atom by its size.
For that purpose we define, for $s\in\R$,
\begin{align*}
\zeta(s):=
\begin{cases}
s+\xi^d[0,s)+\eta^d[0,s),\quad &s\ge 0,\\
s-\xi^d[s,0)-\eta^d[s,0),\quad &s< 0.
\end{cases}
\end{align*}
Define a random measure $\xi^*$ on $\R$ by
\begin{align*}
\xi^*(C):=\int\I\{\zeta(t)\in C\}\,\xi^c(dt)+
\iint \I\{\zeta(t)\le v\le \zeta(t)+\xi\{t\},v\in C\}
\xi\{t\}^{-1}\,dv\,\xi^d(dt).
\end{align*}  
Define another random measure $\eta^*$ on $\R$
by replacing $\xi$ with $\eta$ in the above r.h.s.  
Then $\xi^*$ and $\eta^*$ are diffuse,
and it is easy to check that these random measures are again
mutually singular. 

To express $(\xi,\eta)$ in terms of $(\xi^*,\eta^*)$, we use
the generalized inverse $\zeta^{-1}$ of $\zeta$, defined~by
\begin{align*}
\zeta^{-1}(t):=\inf\{s\in\R\colon \zeta(s)\ge t\},\quad t\in\R.
\end{align*}
Since $\zeta$ is strictly increasing,  
the inverse time change
$\zeta^{-1}$ is continuous. 


\begin{lemma}\label{l1} Let $s\in\R$ and $v\in[0,\xi\{s\}\vee\eta\{s\}]$. Then
$\zeta^{-1}(\zeta(s)+v)=s$.
\end{lemma}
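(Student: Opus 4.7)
The plan is to read off $\zeta^{-1}(\zeta(s)+v)$ directly from the definition of $\zeta$, using the fact that $\zeta$ creates a jump of size $\xi\{s\}+\eta\{s\}$ at $s$.

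First I would establish the key monotonicity bound: for every $r>s$,
$$\zeta(r)-\zeta(s)=(r-s)+(\xi^d+\eta^d)[s,r)\ge (r-s)+\xi\{s\}+\eta\{s\}.$$
The identity on the left comes from the piecewise definition of $\zeta$; it is verified uniformly across the three cases $0\le s<r$, $s<0\le r$, and $s<r\le 0$ by a routine cancellation of the terms $\xi^d[s,0)+\xi^d[0,r)=\xi^d[s,r)$ (and similarly for $\eta^d$). The inequality then uses $(\xi^d+\eta^d)[s,r)\ge \xi^d\{s\}+\eta^d\{s\}=\xi\{s\}+\eta\{s\}$. In particular, for every $r>s$ one has the strict bound $\zeta(r)>\zeta(s)+\xi\{s\}+\eta\{s\}$.

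Now fix $v\in[0,\xi\{s\}\vee\eta\{s\}]$ and set $t:=\zeta(s)+v$. For any $r<s$, the strict monotonicity of $\zeta$ (noted in the text just before the lemma) gives $\zeta(r)<\zeta(s)\le t$, so $r\notin\{r'\in\R:\zeta(r')\ge t\}$; hence $\zeta^{-1}(t)\ge s$. Conversely, for every $r>s$ the displayed bound yields
$$\zeta(r)>\zeta(s)+\xi\{s\}+\eta\{s\}\ge \zeta(s)+(\xi\{s\}\vee\eta\{s\})\ge \zeta(s)+v=t,$$
so every such $r$ belongs to the set above, and the infimum is at most $s$. Combining the two bounds gives $\zeta^{-1}(\zeta(s)+v)=s$.

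I do not anticipate any substantial obstacle: once the identity $\zeta(r)-\zeta(s)=(r-s)+(\xi^d+\eta^d)[s,r)$ is in hand, the conclusion is an immediate unpacking of the definition of the generalized inverse. The only mild bookkeeping is the sign convention in the piecewise definition of $\zeta$, which is handled by the short case split above.
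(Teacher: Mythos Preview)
Your proof is correct and follows essentially the same approach as the paper's: both arguments reduce to the fact that $\zeta(r)>\zeta(s)+v$ for every $r>s$. The paper argues by contradiction via the equivalence $\zeta(s)\le t \Leftrightarrow s\le \zeta^{-1}(t)$ (which it records for later use), whereas you read off the infimum directly from the increment identity $\zeta(r)-\zeta(s)=(r-s)+(\xi^d+\eta^d)[s,r)$.
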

\begin{proof}
 Since $\zeta$ is (strictly) increasing, it 
is easy to prove the equivalence
\begin{align}\label{e3.91}
\zeta(s)\le t\; \Longleftrightarrow\; s\le \zeta^{-1}(t),
\end{align}
valid for all $s,t\in\R$. Applying this to the trivial
inequality $\zeta(s)\le \zeta(s)+v$ yields
$s\le \zeta^{-1}(\zeta(s)+v)$.
Assume by contradiction  that this inequality is strict, that is, 
$s< s'$ where $s' = \zeta^{-1}(\zeta(s)+v)$. 
Trivially, $s'\leq \zeta^{-1}(\zeta(s)+v)$ and  \eqref{e3.91} 
yields $\zeta(s')\le \zeta(s)+v$. 
This, together with $v \le \xi\{s\}\vee\eta\{s\}$, 
implies (for $s\ge 0$; the case $s<0$ is similar) that 
$$
s'+\xi^d[0,s')+\eta^d[0,s')\le s+\xi^d[0,s]+\eta^d[0,s]. 
$$
Now  $s'>s$ and $\xi^d[0,s')+\eta^d[0,s')\ge \xi^d[0,s]+\eta^d[0,s]$, which leads
to a contradiction. 
\end{proof}

\begin{lemma}\label{l2} Let $s_1<s_2$,  $v_1\in[0,\xi\{s_1\}\vee\eta\{s_1\}]$ 
and $v_2\in[0,\xi\{s_2\}\vee \eta\{s_2\}]$. Then 
$$
\xi^*[\zeta(s_1)+v_1,\zeta(s_2)+v_2]
=\I\{\eta\{s_1\}=0\}(\xi\{s_1\}-v_1)+\xi(s_1,s_2)+\I\{\eta\{s_2\}=0\}v_2.
$$
\end{lemma}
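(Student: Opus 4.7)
The plan is to decompose $\xi^*$ into its two pieces from the definition, namely the part coming from $\xi^c$ pushed forward by $\zeta$ and the atomic-spreading part contributed by each $\xi$-atom, and then compute each contribution to $[\zeta(s_1)+v_1,\zeta(s_2)+v_2]$ separately. Throughout, mutual singularity of $\xi$ and $\eta$ will be used to conclude that $\xi\{s\}>0$ implies $\eta\{s\}=0$ (and vice versa), so the constraint $v_i\in[0,\xi\{s_i\}\vee\eta\{s_i\}]$ is equivalent to $v_i\in[0,\xi\{s_i\}+\eta\{s_i\}]$.

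For the atomic part, each $\xi$-atom $a$ contributes Lebesgue measure of total mass $\xi\{a\}$ on $[\zeta(a),\zeta(a)+\xi\{a\}]$. For $a\in(s_1,s_2)$ I first check that this whole interval lies inside $[\zeta(s_1)+v_1,\zeta(s_2)+v_2]$: strict monotonicity of $\zeta$ yields $\zeta(a)+\xi\{a\}\le\zeta(s_2)\le\zeta(s_2)+v_2$, while $\zeta(a)\ge\zeta(s_1^+)=\zeta(s_1)+\xi\{s_1\}+\eta\{s_1\}\ge\zeta(s_1)+v_1$. Hence the interior atoms jointly contribute $\xi^d(s_1,s_2)$. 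The endpoint atom at $s_1$, when present, contributes $\xi\{s_1\}-v_1$; this equals $\I\{\eta\{s_1\}=0\}(\xi\{s_1\}-v_1)$, because mutual singularity forces $\eta\{s_1\}=0$ whenever $\xi\{s_1\}>0$, and when $\xi\{s_1\}=0$ the stated expression is automatically zero (either the indicator vanishes, if $\eta\{s_1\}>0$, or the bound forces $v_1=0$). The atom at $s_2$ is handled symmetrically and yields $\I\{\eta\{s_2\}=0\}v_2$.

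For the diffuse part I need $\xi^c(\{t:\zeta(t)\in[\zeta(s_1)+v_1,\zeta(s_2)+v_2]\})$. By mutual singularity, $\xi^c$ is concentrated on the set $\{t:\xi\{t\}=\eta\{t\}=0\}$, on which $\zeta$ is continuous. For such a $t$, the equivalence \eqref{e3.91} together with Lemma \ref{l1} gives $\zeta(t)\le\zeta(s_2)+v_2 \Longleftrightarrow t\le s_2$; on the other side, strict monotonicity of $\zeta$ combined with $\zeta(s_1^+)=\zeta(s_1)+\xi\{s_1\}+\eta\{s_1\}\ge\zeta(s_1)+v_1$ gives $\zeta(t)\ge\zeta(s_1)+v_1 \Longleftrightarrow t\ge s_1$. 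Thus the relevant set is $[s_1,s_2]$ up to a $\xi^c$-null set, and the diffuseness of $\xi^c$ turns this into a contribution of $\xi^c(s_1,s_2)$.

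Summing the four contributions gives
$\xi^c(s_1,s_2)+\xi^d(s_1,s_2)+\I\{\eta\{s_1\}=0\}(\xi\{s_1\}-v_1)+\I\{\eta\{s_2\}=0\}v_2=\xi(s_1,s_2)+\I\{\eta\{s_1\}=0\}(\xi\{s_1\}-v_1)+\I\{\eta\{s_2\}=0\}v_2$, as claimed. The main subtlety I expect is the endpoint bookkeeping: checking that the indicator formulas agree uniformly across all four combinations of $\xi\{s_i\}$ and $\eta\{s_i\}$ being zero or positive, and that the required interval containments at the two endpoints hold precisely under the stated bound $v_i\in[0,\xi\{s_i\}\vee\eta\{s_i\}]$.
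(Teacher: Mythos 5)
Your proof is correct and takes essentially the same route as the paper's: decompose $\xi^*$ into the push-forward of $\xi^c$ under $\zeta$ and the atom-spreading part, localize the relevant $t$'s via \eqref{e3.91} and Lemma \ref{l1} (equivalently, the strict monotonicity and right-limit facts about $\zeta$ you invoke), and then do the endpoint bookkeeping. The only difference is cosmetic: you spell out all endpoint cases uniformly via the indicators and mutual singularity, whereas the paper computes the case $\eta\{s_1\}=\eta\{s_2\}=0$ and declares the remaining cases similar.
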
 
\begin{proof}
 Since $\xi^*$ is diffuse, we have
\begin{align*}
\xi^*[\zeta(s_1)+v_1,\zeta(s_2)+v_2]
=\int \I\{\zeta(s_1)+v_1<t\le \zeta(s_2)+v_2\}\,\xi^*(dt)=I_1+I_2, 
\end{align*}
where
\begin{align*}
I_1&:=\int \I\{\zeta(s_1)+v_1<\zeta(s)\le \zeta(s_2)+v_2\}\,\xi^c(ds),\\
I_2&:=\sum_{s\in \R}\I\{\xi\{s\}>0\}
\int \I\{\zeta(s)< v\le \zeta(s)+\xi\{s\},
\zeta(s_1)+v_1<v\le \zeta(s_2)+v_2\}\,dv.
\end{align*}
Apply first  \eqref{e3.91} and then 
Lemma \ref{l1} to obtain 
\begin{align}\label{e3.17}
I_1=\int \I\{s_1<s\le s_2\}\,\xi^c(ds)=\xi^c(s_1,s_2).
\end{align}
Turning to $I_2$, we restrict ourselves to the
case $\eta\{s_1\}=\eta\{s_2\}=0$. The other cases
can be treated similarly. First note that the inequalities
$v\le \zeta(s)+\xi\{s\}$ and $\zeta(s_1)+v_1\le v$ imply 
$s_1\le s$ (by Lemma \ref{l1}), while the inequalities
$v\le \zeta(s_2)+v_2$ and $\zeta(s)\le v$ imply $s\le s_2$.
Splitting into the three cases\, $s = s_1$,\, $s_1 < s < s_2$,\, $s = s_2$\, yields
\begin{align*}
I_2=&\I\{\xi\{s_1\}>0\}\int \I\{\zeta(s_1)+v_1<v\le \zeta(s_1)+\xi\{s_1\}\}\,dv\\
&+\sum_{s_1<s<s_2} \I\{\xi\{s\}>0\}
\int \I\{\zeta(s)< v\le \zeta(s)+\xi\{s\}\}\,dv\\
&+\I\{\xi\{s_2\}>0\}\int \I\{\zeta(s_2)<v\le \zeta(s_2)+v_2\}\,dv.
\end{align*}
It follows that
$$
I_2=\xi\{s_1\}-v_1+\xi^d(s_1,s_2)+v_2.
$$
Combining this with \eqref{e3.17} yields the assertion of
the lemma.
\end{proof}

According to the following change-of-variable result, $\zeta^{-1}$ balances $\xi^*$ and $\xi$.

\begin{lemma}\label{l3} Let $f\colon\R\to[0,\infty)$ be measurable. Then
\begin{align}\label{e3.4}
\int f(s)\,\xi(ds)=\int f(\zeta^{-1}(t))\,\xi^*(dt).
\end{align}
\end{lemma}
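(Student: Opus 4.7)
The plan is to decompose $\xi=\xi^c+\xi^d$ and evaluate the right-hand side of \eqref{e3.4} by applying the defining formula for $\xi^*$ with the test function $g(t):=f(\zeta^{-1}(t))$. This gives
\begin{align*}
\int f(\zeta^{-1}(t))\,\xi^*(dt)
&=\int f(\zeta^{-1}(\zeta(s)))\,\xi^c(ds)\\
&\quad +\iint f(\zeta^{-1}(v))\I\{\zeta(s)\le v\le \zeta(s)+\xi\{s\}\}\xi\{s\}^{-1}\,dv\,\xi^d(ds),
\end{align*}
and the task reduces to identifying each term with its counterpart in $\int f\,d\xi$.

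For the first term, I would apply Lemma \ref{l1} with $v=0$ (legitimate since $0\in[0,\xi\{s\}\vee\eta\{s\}]$) to conclude $\zeta^{-1}(\zeta(s))=s$ identically, so the first term equals $\int f(s)\,\xi^c(ds)$. For the second term, note that $\xi^d$ is concentrated on the atoms $\{s:\xi\{s\}>0\}$, and on the range $\zeta(s)\le v\le \zeta(s)+\xi\{s\}$ the shift $w:=v-\zeta(s)$ lies in $[0,\xi\{s\}]\subset [0,\xi\{s\}\vee\eta\{s\}]$. Lemma \ref{l1} therefore gives $\zeta^{-1}(v)=s$ throughout the inner integral, so I can pull $f(s)$ out and the inner Lebesgue integral evaluates to $\xi\{s\}$, cancelling the factor $\xi\{s\}^{-1}$. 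The second term thus equals $\int f(s)\,\xi^d(ds)$.

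Summing the two contributions yields $\int f\,d(\xi^c+\xi^d)=\int f\,d\xi$, which is exactly \eqref{e3.4}. The argument is essentially a bookkeeping exercise once Lemma \ref{l1} is in hand; the only subtle point is verifying that the inverse time change really does send the entire interval $[\zeta(s),\zeta(s)+\xi\{s\}]$ back to the single point $s$, which is what makes $\xi\{s\}^{-1}\,dv$ the right probability kernel to spread the atom of mass $\xi\{s\}$ uniformly and recover the atom under inversion. Extension to general measurable $f\ge 0$ is then immediate by the standard monotone-class argument, since both sides are measures in $f$ and the identity holds for indicators of Borel sets via the computation above.
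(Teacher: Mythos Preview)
Your proof is correct but proceeds by a different route than the paper. The paper verifies \eqref{e3.4} only for $f=\I_{[a,b)}$, translating the condition $\zeta^{-1}(t)\in[a,b)$ into $\zeta(a)\le t<\zeta(b)$ via \eqref{e3.91} and then invoking Lemma~\ref{l2} (with $v_1=v_2=0$) to compute $\xi^*[\zeta(a),\zeta(b))=\xi[a,b)$; the general case follows by a $\pi$-$\lambda$ argument. You instead substitute $g=f\circ\zeta^{-1}$ directly into the defining formula for $\xi^*$ and use Lemma~\ref{l1} to collapse both the diffuse and discrete contributions back to $\int f\,d\xi^c$ and $\int f\,d\xi^d$. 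Your approach is arguably more transparent, since it bypasses the interval computation of Lemma~\ref{l2} entirely and works for arbitrary $f\ge 0$ in one stroke; the paper's approach has the small advantage of not needing to first extend the definition of $\xi^*$ from sets to nonnegative integrands (though that extension is routine). One minor remark: your closing sentence about a monotone-class extension is superfluous, because your computation already treats general measurable $f\ge 0$ directly.
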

\begin{proof} 
It suffices to establish \eqref{e3.4} for
$f:=\I_{[a,b)}$, where $a<b$.
Using \eqref{e3.91} we obtain
\begin{align*}
\int \I_{[a,b)}(\zeta^{-1}(t))\,\xi^*(dt)
=\int\I\{\zeta(a)\le t<\zeta(b)\}\,\xi^*(dt)
=\xi[a,b),
\end{align*}
where we have used Lemma \ref{l2} (with $v_1=
v_2=0$)
to get the second identity.
\end{proof}

Define 
\begin{align}\label{matching*}
\tau^*(s):=\inf\{t> s \colon \xi^*[s,t]\le \eta^*[s,t]\},\quad s\in\R.
\end{align}

\begin{lemma}\label{l13} Let $s\in\R$ with 
$\xi\{s\}=\eta\{s\}=0$. Then
$\tau^*(\zeta(s))<\infty$ iff $\tau^0(s)<\infty$.
In this case
\begin{align}\label{e3.10}
\zeta^{-1}(\tau^*(\zeta(s)))=\tau^0(s).
\end{align}
\end{lemma}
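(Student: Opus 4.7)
Write $\sigma := \tau^0(s)$ and $\sigma^* := \tau^*(\zeta(s))$. Under the hypothesis $\xi\{s\} = \eta\{s\} = 0$, the definition \eqref{umatching} reduces to $\sigma = \inf\{t > s : \xi(s,t) \le \eta[s,t]\}$. The plan is to establish the two-sided bound $\zeta(\sigma) \le \sigma^* \le \zeta(\sigma+)$ in the finite case, together with the equivalence $\sigma^* = \infty \Leftrightarrow \sigma = \infty$. Since $\zeta$ is strictly increasing and left-continuous with a right-jump of size $\xi\{\sigma\} + \eta\{\sigma\}$ at $\sigma$ (of which at most one summand is positive by mutual singularity), the level set $\{t : \zeta^{-1}(t) = \sigma\}$ equals $[\zeta(\sigma), \zeta(\sigma+)]$, so the bound yields \eqref{e3.10}.

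The computational engine is Lemma \ref{l2} together with its analogue for $\eta^*$ obtained by interchanging the roles of $\xi$ and $\eta$, which is valid because $\xi^*$, $\eta^*$, and $\zeta$ are defined symmetrically in $\xi$ and $\eta$. Applied at $s_1 = s$, $v_1 = 0$ (with $\xi\{s\} = \eta\{s\} = 0$ making the left-endpoint indicator terms vanish), these formulas give explicit expressions for $\xi^*[\zeta(s), \zeta(r)+v]$ and $\eta^*[\zeta(s), \zeta(r)+v]$ for any $r > s$ and $v \in [0, \xi\{r\} \vee \eta\{r\}]$, involving only $\xi(s,r)$, $\eta(s,r)$, and $v$ weighted by the indicators $\mathbf{1}\{\eta\{r\}=0\}$ and $\mathbf{1}\{\xi\{r\}=0\}$.

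For the upper bound $\sigma^* \le \zeta(\sigma+)$ when $\sigma < \infty$, evaluate the $\tau^*$-condition at $t = \zeta(\sigma) + \eta\{\sigma\}$: Lemma \ref{l2} with $s_2 = \sigma$, $v_2 = \eta\{\sigma\}$ gives $\xi^*[\zeta(s), t] = \xi(s, \sigma)$, and its $\eta^*$-analogue gives $\eta^*[\zeta(s), t] = \eta(s, \sigma) + \eta\{\sigma\} = \eta[s, \sigma]$ (using $\xi\{\sigma\} = 0$ whenever $\eta\{\sigma\} > 0$), so the $\tau^*$-condition reduces to $\xi(s, \sigma) \le \eta[s, \sigma]$, which holds by definition of $\sigma$. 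For the lower bound $\sigma^* \ge \zeta(\sigma)$, suppose otherwise; then by definition of the infimum there exists $t \in (\zeta(s), \zeta(\sigma))$ satisfying $\xi^*[\zeta(s), t] \le \eta^*[\zeta(s), t]$, and the monotonicity of $\zeta^{-1}$ combined with $\zeta^{-1}(t) = \sigma$ holding only for $t \in [\zeta(\sigma), \zeta(\sigma+)]$ forces $r := \zeta^{-1}(t) < \sigma$. Writing $t = \zeta(r) + v$ and splitting into the three sub-cases $v = 0$, $\xi\{r\} > 0$ with $v \in (0, \xi\{r\}]$, and $\eta\{r\} > 0$ with $v \in (0, \eta\{r\}]$, Lemma \ref{l2} and its $\eta^*$-analogue translate the $\tau^*$-condition into $\xi(s, r) \le \eta[s, r]$ in each case, yielding $\tau^0(s) \le r < \sigma$, a contradiction.

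The same trichotomy handles the remaining implication: if $\sigma = \infty$ but $\sigma^* < \infty$, applying the translation at $t = \sigma^*$ gives $\tau^0(s) \le \zeta^{-1}(\sigma^*) < \infty$, again a contradiction. The main obstacle is the careful bookkeeping of atomic contributions through the time change $\zeta$; Lemma \ref{l2}'s flexible $v_1, v_2$ parameters are tailor-made for this, and once the $\eta^*$-analogue is recorded the proof reduces to the methodical case split outlined above, indexed by whether $\sigma$ and $r$ are $\xi$-atoms, $\eta$-atoms, or neither.
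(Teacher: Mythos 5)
Your plan is essentially a reorganization of the paper's argument around the same computational engine: Lemma \ref{l2} (and its $\eta^*$-analogue, which is indeed valid by symmetry, since $\zeta$ is symmetric in $\xi$ and $\eta$), together with Lemma \ref{l1}. Where the paper starts from $t^*=\tau^*(\zeta(s))$ and reads off $\tau^0(s)$ by a case distinction on $t^*$, you start from $\sigma:=\tau^0(s)$ and sandwich $\sigma^*:=\tau^*(\zeta(s))$ in $[\zeta(\sigma),\zeta(\sigma)+\eta\{\sigma\}]\subset[\zeta(\sigma),\zeta(\sigma+)]$; combined with the level-set identity $\{t:\zeta^{-1}(t)=\sigma\}=[\zeta(\sigma),\zeta(\sigma+)]$ (which uses mutual singularity, so that $\xi\{\sigma\}+\eta\{\sigma\}=\xi\{\sigma\}\vee\eta\{\sigma\}$) this does give \eqref{e3.10}, and your lower-bound trichotomy in $r=\zeta^{-1}(t)$, $t=\zeta(r)+v$, is correct as written.

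There is, however, a genuine gap at the boundary case $\sigma=s$ (equivalently $\sigma^*=\zeta(s)$), which is exactly the case the paper treats at length in its first paragraph. Your upper bound evaluates the $\tau^*$-condition ``at $t=\zeta(\sigma)+\eta\{\sigma\}$''; when $\sigma=s$ this point equals $\zeta(s)$ (recall $\eta\{s\}=0$), which is not an admissible point of the infimum \eqref{matching*} (taken over $t>\zeta(s)$), and the condition there is the vacuous $0\le 0$, so it yields no bound on $\sigma^*$; yet in this case one must show $\sigma^*=\zeta(s)$ exactly. Similarly, in your treatment of $\sigma=\infty$, ``applying the translation at $t=\sigma^*$'' is empty if $\sigma^*=\zeta(s)$, since then $r=\zeta^{-1}(\sigma^*)=s$ is not in the set defining $\tau^0(s)$, so you cannot conclude $\tau^0(s)\le r$. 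Both boundary situations are possible a priori and must be handled by an approximating-sequence argument: take admissible points $t_n$ strictly above the left endpoint, decreasing to it and satisfying the relevant inequality, translate each $t_n$ via Lemma \ref{l2} (splitting according to whether $\zeta^{-1}(t_n)$ carries a $\xi$-atom, an $\eta$-atom, or neither), and pass to the limit using the continuity of $\zeta^{-1}$. Two smaller glossed points should also be made explicit: the inequality $\xi(s,\sigma)\le\eta[s,\sigma]$ does not hold ``by definition of $\sigma$'' when the infimum is not attained, but follows by letting $t_n\downarrow\sigma$ in $\xi(s,t_n)\le\eta[s,t_n]$; and your infinite-case step uses that the starred condition holds at $t=\sigma^*$ itself, which requires the diffuseness of $\xi^*$ and $\eta^*$ (the analogue of the paper's display \eqref{e3.14}). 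With these repairs your route goes through and is a legitimate alternative organization of the same proof.
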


\begin{proof}
We abbreviate $t^*:=\tau^*(\zeta(s))$. 

First consider the case $t^*=\zeta(s)$. Then $\zeta^{-1}(t^*)=s$
(by Lemma \ref{l1}) and we need to show that $\tau^0(s)=s$.
There are $t_n>t^*$, $n\in\N$, such that
$\xi^*[\zeta(s),t_n]\le \eta^*[\zeta(s),t_n]$ and $t_n\downarrow t^*$. 
We distinguish two cases.
In the first case, there are infinitely many $n\in\N$ such
that $t_n=\zeta(s_n)+v_n$ for some $s_n\in\R$ satisfying
$\eta\{s_n\}=0$ and $v_n\in[0,\xi\{s_n\}]$.
Lemma \ref{l2} implies $\xi[s,s_n)+v_n\le\eta[s,s_n]$ 
and hence $\xi[s,s_n)\le \eta[s,s_n]$.
Since $t^*<t_n=\zeta(s_n+v_n)$ we obtain from
\eqref{e3.91} and Lemma \ref{l1} that $s_n>\zeta^{-1}(t^*)=s$.
Lemma \ref{l1} and the continuity of $\zeta^{-1}$ imply
$s_n=\zeta^{-1}(t_n)\downarrow \zeta^{-1}(t^*)=s$ along the chosen
subsequence. Hence $\tau^0(s)=s$.
In the second case, there are infinitely many $n\in\N$ such
that $t_n=\zeta(s_n)+v_n$ for some $s_n\in\R$ satisfying
$\eta\{s_n\}>0$ and $v_n\in[0,\eta\{s_n\}]$.
Then $\xi\{s_n\}=0$ and
Lemma \ref{l2} implies $\xi[s,s_n)\le \eta[s,s_n)+v_n$ 
and hence $\xi[s,s_n)\le \eta[s,s_n]$.
As before, it follows that $s_n>s$
and $s_n\downarrow s$ along the chosen
subsequence. Hence $\tau^0(s)=s$ in this case.

Assume next that $t^*\in(\zeta(s),\infty)$.
By definition,
\begin{align}\label{e3.12}
\xi^*[\zeta(s),r]>\eta^*[\zeta(s),r],\quad \zeta(s)<r<t^*,
\end{align}
as well as
\begin{align}\label{e3.14}
  \xi^*[\zeta(s),t^*]=\eta^*[\zeta(s),t^*].
\end{align}
Assume first that $t^*=\zeta(t)+v$ for some $t\ge s$
with $\eta\{t\}=0$ and $v\in[0,\xi\{t\}]$. Then $t>s$ (by \eqref{e3.14}) and
Lemma \ref{l1} implies that $\zeta^{-1}(t^*)=t$. We want to show
that $\tau^0(s)=t$.
Let $t'\in (s,t)$. If $\xi\{t'\}=0$, we set 
$r:=\zeta(t')+\eta\{t'\}$. Then $r<\zeta(t)\le t^*$
and \eqref{e3.12} together with Lemma \ref{l2} imply that
$\xi[s,t')>\eta[s,t']$.
If $\xi\{t'\}>0$, we set $r:=\zeta(t')$ to obtain
the same inequality and hence
\begin{align}\label{e3.16}
\xi[s,t')>\eta[s,t'],\quad s<t'<t.
\end{align}
On the other hand, we have from \eqref{e3.14}
and Lemma \ref{l2} that $\xi[s,t)+v=\eta[s,t]$,
so that $\xi[s,t)\le\eta[s,t]$.
Hence $\tau^0(s)=t=\zeta^{-1}(t^*)$ and \eqref{e3.10}
follows.

The second possible case is 
$t^*=\zeta(t)+v$ for some $t\ge s$
with $\eta\{t\}>0$ and $v\in[0,\eta\{t\}]$.
Since $\xi$ and $\eta$ are mutually singular, we
have $\xi\{t\}=0$.
Again this implies $t>s$ and \eqref{e3.16}.
Lemma \ref{l2} implies that $\xi[s,t)+v=\eta[s,t)$
and hence $\xi[s,t)\le\eta[s,t]$. Therefore
$\tau^0(s)=t=\zeta^{-1}(t^*)$, where we have used Lemma \ref{l1}.

Assume, finally, that $t^*=\infty$, so that \eqref{e3.12}
holds for all $r>\zeta(s)$.
Let $t'>s$. If $\xi\{t'\}=\eta\{t'\}=0$,
we take $r=\zeta(t')$ to obtain from Lemma \ref{l2} that
$\xi[s,t')>\eta[s,t']$.
If $\xi\{t'\}>0$ (and hence $\eta\{t'\}=0$),
we take $r=\zeta(t')$ to obtain from Lemma \ref{l2} that
$\xi[s,t')>\eta[s,t']$.
If $\eta\{t'\}>0$ (and hence $\xi\{t'\}=0$),
we take $r=\zeta(t')+\eta\{t'\}$ to obtain from Lemma \ref{l2} that
$\xi[s,t')>\eta[s,t']$. Hence $\tau^0(s)=\infty$. 
\end{proof}

\begin{lemma}\label{l14} Let $s\in\R$ with 
$\xi\{s\}>0$ and $u\in[0,1)$. Then
$\tau^*(\zeta(s)+u\xi\{s\})<\infty$ iff $\tau^{1-u}(s)<\infty$.
In this case
\begin{align}\label{e3.21}
\zeta^{-1}(\tau^*(\zeta(s)+u\xi\{s\}))=\tau^{1-u}(s).
\end{align}
\end{lemma}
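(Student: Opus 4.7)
The plan is to adapt the proof of Lemma \ref{l13} to the case where the starting point $w := \zeta(s) + u\xi\{s\}$ lies strictly inside the stretched atom of $\xi^*$ at $\zeta(s)$. Mutual singularity forces $\eta\{s\} = 0$, so the interval $[\zeta(s), \zeta(s) + \xi\{s\}]$ carries $\xi^*$-mass $\xi\{s\}$ (spread uniformly) but no $\eta^*$-mass, and by Lemma \ref{l1} one has $\zeta^{-1}(w) = s$. Setting $t^* := \tau^*(w)$, the first step is to observe that $t^* > \zeta(s) + \xi\{s\}$: for every $r \in (w, \zeta(s) + \xi\{s\}]$ one has $\xi^*[w, r] = r - w > 0 = \eta^*[w, r]$, so the defining inequality for $\tau^*$ fails throughout the rest of the atom. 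Consequently, if $t^* < \infty$ it can be written as $t^* = \zeta(t) + v$ with $t = \zeta^{-1}(t^*) > s$ and $v \in [0, \xi\{t\} \vee \eta\{t\}]$.

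The central computation applies Lemma \ref{l2} with $s_1 := s$ and $v_1 := u\xi\{s\}$ (noting $\xi\{s\} - v_1 = (1-u)\xi\{s\}$ and $\I\{\eta\{s\}=0\} = 1$), together with its $\eta^*$ counterpart obtained by the same argument with $\xi$ and $\eta$ interchanged. For any $r = \zeta(t') + v'$ with $t' > s$ and $v' \in [0, \xi\{t'\} \vee \eta\{t'\}]$, these formulas yield
$$\xi^*[w, r] = (1-u)\xi\{s\} + \xi(s, t') + \I\{\eta\{t'\}=0\}\, v', \qquad \eta^*[w, r] = \eta(s, t') + \I\{\xi\{t'\}=0\}\, v'.$$
Comparing these expressions at $r = t^*$ and for $r < t^*$ is what links $\tau^*$ at $w$ to $\tau^{1-u}$ at $s$.

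Mutual singularity leaves two subcases at $r = t^*$: either $\eta\{t\} = 0$ with $v \in [0, \xi\{t\}]$, or $\xi\{t\} = 0$ and $\eta\{t\} > 0$ with $v \in [0, \eta\{t\}]$. In each the equality $\xi^*[w, t^*] = \eta^*[w, t^*]$ yields $(1-u)\xi\{s\} + \xi(s, t) \le \eta[s, t]$ after using $\eta\{s\} = 0$. For $t' \in (s, t)$, one chooses $r := \zeta(t')$ when $\eta\{t'\} = 0$ and $r := \zeta(t') + \eta\{t'\}$ when $\eta\{t'\} > 0$; both give $r < t^*$, and the strict inequality $\xi^*[w, r] > \eta^*[w, r]$ translates into $(1-u)\xi\{s\} + \xi(s, t') > \eta[s, t']$. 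Combined, these give $\tau^{1-u}(s) = t = \zeta^{-1}(t^*)$, which is \eqref{e3.21}. The case $t^* = \infty$ is handled by the same choice of $r$ for each $t' > s$, yielding $\tau^{1-u}(s) = \infty$. The main obstacle is the careful subcase bookkeeping at the endpoint $t$, but mutual singularity excludes the awkward case where $\xi\{t\}$ and $\eta\{t\}$ are both positive, keeping the analysis entirely parallel to Lemma \ref{l13}; the only genuinely new ingredient is the nontrivial $v_1 = u\xi\{s\}$ in Lemma \ref{l2}, which is precisely what produces the factor $(1-u)\xi\{s\}$ appearing in the definition of $\tau^{1-u}$.
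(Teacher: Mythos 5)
Your proposal follows the same route as the paper's own (very terse) proof, which consists of noting $\eta\{s\}=0$, using $u<1$ to exclude a degenerate case, and then repeating the proof of Lemma \ref{l13} with Lemma \ref{l2} applied at $s_1=s$, $v_1=u\xi\{s\}$ --- exactly what you carry out. Your central computations are correct: the two displayed formulas (Lemma \ref{l2} and its $\eta^*$ analogue, with the indicator $\I\{\eta\{s\}=0\}$ producing the term $(1-u)\xi\{s\}$), the choice $r=\zeta(t')$ resp.\ $r=\zeta(t')+\eta\{t'\}$ for $t'\in(s,t)$, the endpoint subcases under mutual singularity, and the $t^*=\infty$ case all match the omitted details of the paper's argument.

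There is, however, one genuine (if small) gap at the start. With your notation $w:=\zeta(s)+u\xi\{s\}$ and $t^*:=\tau^*(w)$, the failure of $\xi^*[w,r]\le\eta^*[w,r]$ for all $r\in(w,\zeta(s)+\xi\{s\}]$ only yields $t^*\ge\zeta(s)+\xi\{s\}$, not the strict inequality you assert: the infimum defining $\tau^*(w)$ could a priori equal the right endpoint of the stretched atom, with the defining inequality holding only at points just above it. This case must be ruled out, since if $t^*=\zeta(s)+\xi\{s\}$ then $\zeta^{-1}(t^*)=s$ by Lemma \ref{l1} while $\tau^{1-u}(s)>s$, so \eqref{e3.21} would fail; and your next step ``$t=\zeta^{-1}(t^*)>s$'', on which the whole endpoint analysis rests, uses precisely this strictness. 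It is also the only place where the hypothesis $u<1$ does real work. The repair is one line and is exactly the point the paper makes explicit before omitting the rest: since $\eta\{s\}=0$ and $\eta$ is locally finite, $\eta[s,s+\varepsilon]<(1-u)\xi\{s\}$ for all sufficiently small $\varepsilon>0$; writing any $r\in(\zeta(s)+\xi\{s\},\zeta(s+\varepsilon))$ as $r=\zeta(t')+v'$ with $s<t'<s+\varepsilon$ and applying Lemma \ref{l2} shows $\xi^*[w,r]\ge(1-u)\xi\{s\}>\eta[s,t']\ge\eta^*[w,r]$, so in fact $t^*\ge\zeta(s+\varepsilon)>\zeta(s)+\xi\{s\}$. (Alternatively, $r\mapsto\xi^*[w,r]-\eta^*[w,r]$ is continuous because $\xi^*$ and $\eta^*$ are diffuse, so if $t^*<\infty$ the inequality holds at $t^*$ itself, while your own computation gives $\xi^*[w,\zeta(s)+\xi\{s\}]=(1-u)\xi\{s\}>0=\eta^*[w,\zeta(s)+\xi\{s\}]$.) With this inserted, your proof is complete and coincides with the paper's intended argument.
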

\begin{proof}
 Since $\xi$ and $\eta$ are mutually singular
we have $\eta\{s\}=0$. Moreover,
since $u<1$ we have $\eta[s,s+\varepsilon]<(1-u)\xi\{s\}$ 
for all sufficiently small $\varepsilon>0$ and therefore
$\tau^{1-u}(s)>s$ and (by Lemma \ref{l2})
$\tau^*(\zeta(s)+u\xi\{s\})>\zeta(s)$.
The proof can now proceed similar to that of Lemma \ref{l13}.
The main tool is again Lemma \ref{l2}. In contrast
to the case $\xi\{s\}=0$, it has to be applied
with $s_1=s$ and $v_1=u\xi\{s\}$. Further details
are omitted.
\end{proof}

In the upcoming proof of Theorem \ref{tmain1}, we will use that $\tau^*$ balances $\xi^*$ and $\eta^*$. 
Since $\xi^*$ and $\eta^*$ need not be jointly stationary, Theorem \ref{LMT5.1} 
cannot be used directly to establish this fact. As an intermediate step, we need the 
following lemma which presents (shifted and length-biased) versions of $\xi^*$ and $\eta^*$ 
that Theorem \ref{LMT5.1} can be applied to. 
As before we abuse notation by introducing
for any measure $\mu$ on $\R$ and $s\in\R$ a new measure
$\theta_s\mu$ by $\theta_s\mu:=\mu(\cdot +s)$.
If $\mu'$ is another measure on $\R$, we write
$\theta_s(\mu,\mu'):=(\theta_s\mu,\theta_s\mu')$.

\begin{lemma}\label{stat}
Let $\xi$ and $\eta$ be random measures on $\R$ defined on 
$(\Omega,\mathcal{F},\BP)$ and let $\xi^*$ 
and $\eta^*$ be as above. Extend  $(\Omega,\mathcal{F},\BP)$ so as to 
support a random variable $U$
that is uniform on $[0,1)$ conditionally on $(\xi^*,\eta^*)$. 
Let $S_1 = 1 + \xi^d[0,1)+\eta^d[0,1)$ and define a $\sigma$-finite measure 
$\BP^*$ on $(\Omega,\mathcal{F})$ by 
$d\BP^* = S_1d\BP$. Then the distribution of
$\theta_{US_1}(\xi^*,\eta^*)$ is stationary and ergodic
under $\BP^*$ and the intensities are 
\begin{align*}
\BE^*(\theta_{US_1}\xi^*)[0,1) = \BE\xi[0,1),\quad 
\BE^*(\theta_{US_1}\eta^*)[0,1) = \BE\eta[0,1).
\end{align*}
\end{lemma}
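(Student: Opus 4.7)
My plan rests on a single equivariance identity relating the flow $\theta$ on $\Omega$ to spatial shifts of $(\xi^*, \eta^*)$. I would first verify, by substituting the invariance $\xi^d(\theta_t\omega, C) = \xi^d(\omega, C+t)$ (and the analogue for $\eta^d$) into the definition of $\zeta$, that
$$\zeta(\theta_t\omega, s) = \zeta(\omega, s+t) - \zeta(\omega, t), \quad s, t \in \R,$$
via a short case analysis on the signs of $s, t, s+t$. Plugging this into the defining formulas for $\xi^*, \eta^*$ and reindexing the inner integrals yields the key identity
$$(\xi^*_{\theta_t\omega}, \eta^*_{\theta_t\omega}) = \theta_{\zeta(\omega, t)}(\xi^*_\omega, \eta^*_\omega),$$
and in particular $\theta_{S_1}(\xi^*, \eta^*) = (\xi^*, \eta^*) \circ \theta_1$. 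Combined with $\BP$-stationarity under $\theta_1$, this gives the distributional identity $\theta_{S_1}(\xi^*, \eta^*) \stackrel{d}{=} (\xi^*, \eta^*)$ under $\BP$, which drives everything below.

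For stationarity of $\theta_{US_1}(\xi^*, \eta^*)$ under $\BP^*$, I would fix a bounded measurable functional $g$ on pairs of measures. Using $d\BP^* = S_1\, d\BP$ and the conditional uniformity of $U$,
$$\psi(r) := \BE^*[g(\theta_{r+US_1}(\xi^*, \eta^*))] = \BE\Big[\int_r^{r+S_1} g(\theta_v(\xi^*, \eta^*))\, dv\Big].$$
Because $S_1 \geq 1$ a.s., for $h \in (0, 1]$ the overlap $[r+h, r+S_1]$ cancels and
$$\psi(r+h) - \psi(r) = \int_0^h \big(\BE[g(\theta_{r+S_1+u}(\xi^*, \eta^*))] - \BE[g(\theta_{r+u}(\xi^*, \eta^*))]\big)\, du.$$
The equivariance rewrites the first expectation as $\BE[g(\theta_{r+u}(\xi^*, \eta^*))\circ \theta_1]$, which equals the second by $\BP$-stationarity; hence $\psi$ is locally, and by iteration globally, constant in $r$.

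The intensity runs on the same telescoping. Fubini gives
$$\BE^*[(\theta_{US_1}\xi^*)[0, 1)] = \BE\Big[\int_0^{S_1} \xi^*[v, v+1)\, dv\Big],$$
and splitting the inner integration by whether $w \in \xi^*(dw)$ lies in $[0,1]$, $[1, S_1]$, or $[S_1, S_1+1]$, together with the identity $\xi^*[0, S_1) = \xi[0, 1)$ (a consequence of Lemma~\ref{l3}), collapses the right-hand side to $\BE[\xi[0,1) - F + F \circ \theta_1]$ with $F := \int_0^1 (1-w)\, \xi^*(dw)$. The estimate $F \leq \xi[0, 1)$ ensures integrability, and $\BE[F \circ \theta_1] = \BE[F]$ then gives the value $\lambda_\xi$; the $\eta$ case is identical. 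For ergodicity, any shift-invariant event $A$ satisfies $\{\theta_{US_1}(\xi^*,\eta^*) \in A\} = \{(\xi^*, \eta^*) \in A\}$, and the equivariance identity makes this event $\theta$-invariant in $\Omega$; $\BP$-ergodicity then forces the $0/1$ dichotomy, which transfers to $\BP^*$ since $\BP^* \ll \BP$.

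The main obstacle I anticipate is verifying the equivariance identity itself: the atomic part of $\xi^*$ involves an inner Lebesgue integral over $[\zeta(t), \zeta(t) + \xi\{t\}]$, so reindexing under $s \mapsto s+t$ while tracking the atoms of $\xi^d + \eta^d$ needs some mechanical care. Once this is done, the remaining bookkeeping is cleanly absorbed by the telescope $F \circ \theta_1 - F$.
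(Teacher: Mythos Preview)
Your approach is essentially that of the paper: the equivariance identity $(\xi^*_{\theta_t\omega},\eta^*_{\theta_t\omega})=\theta_{\zeta(\omega,t)}(\xi^*_\omega,\eta^*_\omega)$ is exactly the paper's identity \eqref{anotherlemma}, and both you and the paper prove stationarity by writing $\BE^*[g(\theta_r\theta_{US_1}(\xi^*,\eta^*))]=\BE\int_r^{r+S_1} g(\theta_s(\xi^*,\eta^*))\,ds$ and then telescoping via the distributional identity $\theta_{S_1}(\xi^*,\eta^*)\stackrel{d}{=}(\xi^*,\eta^*)$. The ergodicity argument is also the same.

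There is one genuine technical gap. When you form $\psi(r+h)-\psi(r)$ and rewrite it as a difference of two $\BE$-integrals, you are subtracting quantities that may both equal $+\infty$: the measure $\BP$ is only $\sigma$-finite, so $\BE[g(\cdot)]$ need not be finite for bounded $g$, and neither need the overlap term $\BE\int_{r+h}^{r+S_1}g(\theta_s(\xi^*,\eta^*))\,ds$. The paper addresses this by choosing $A_n\uparrow\Omega$ with $\BP(A_n)<\infty$, replacing $g$ by $g\I_{A_n}$ (so that the cross-term is dominated by $\|g\|_\infty\, t\,\BP(A_n)<\infty$), and passing to the limit via monotone convergence. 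Alternatively, for $g\ge 0$ you can avoid subtraction entirely: write $\psi(r)=A+B$ and $\psi(r+h)=B+C$ as sums in $[0,\infty]$ and show $A=C$ directly, which your equivariance argument already delivers.

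Your intensity computation is a genuine alternative to the paper's. The paper argues geometrically in the $(s,x)$-plane with the product measure $\mu_1\otimes\xi^*$, sliding a triangular region by $(S_1,S_1)$ and invoking the distributional invariance of $\mu_1\otimes\xi^*$ under that shift. You instead compute $\int_0^{S_1}\xi^*[v,v+1)\,dv=\xi[0,1)-F+F\circ\theta_1$ with $F=\int_0^1(1-w)\,\xi^*(dw)$ and use $\BE[F\circ\theta_1]=\BE[F]$ together with the integrability bound $F\le\xi^*[0,S_1)=\xi[0,1)$. Both routes are correct; yours is more direct and avoids the picture.
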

\begin{proof} 
Let $M$ denote the space of locally finite measures on $\R$, equipped with
the natural Kolmogorov $\sigma$-field. We have assumed that $\xi$ and $\eta$ are random elements
of $M$.

Take $t \geq 0$ and let $f\colon M\times M\to[0,\infty)$ be bounded 
and measurable. 
For $n \in \mathbb{N}$, let $A_n \in \mathcal{F}$
be such that 
$\BP(A_n) < \infty$ and $A_1 \subset A_2  \subset A_3 \subset \dots\uparrow \Omega$. 
Set $f_n = f1_{A_n}$ and note that $\BP(A_n) < \infty$ implies that 
$\BE \int_{S_1}^{S_1+t} f_n(\theta_s(\xi^*,\eta^*))ds$ is finite. 
Since $\int_t^{S_1} f_n(\theta_s(\xi^*, \eta^*))ds$ could be negative, the 
finiteness is needed for the last equality in
\begin{align*}
\BE^*f_n(\theta_t\theta_{US_1}(\xi^*, \eta^*))&=\BE S_1 f_n(\theta_t\theta_{US_1}(\xi^*, \eta^*))
=\BE \int_t^{S_1+t} f_n(\theta_s(\xi^*, \eta^*))ds\\
&=\BE \int_t^{S_1} f_n(\theta_s(\xi^*, \eta^*))ds + \BE \int_{S_1}^{S_1+t} f_n(\theta_s(\xi^*, \eta^*))ds.
\end{align*}
Now note that $\theta_{S_1}(\xi^*,\eta^*)$ is the same measurable mapping of 
$\theta_1(\xi,\eta)$ as $(\xi^*,\eta^*)$ is of 
$(\xi,\eta)$. 
Since $\theta_1(\xi,\eta)$ has the same  $\BP$-distribution as $(\xi,\eta)$, 
this implies that 
$\theta_{S_1}(\xi^*,\eta^*)$ has the same $\BP$-distribution as $(\xi^*,\eta^*)$. 
This further yields the first equality in  
\begin{align*}
\BE^*f_n(\theta_t\theta_{US_1}(\xi^*, \eta^*))
&=\BE \int_t^{S_1} f_n(\theta_s(\xi^*, \eta^*))ds + \BE \int_0^{t} f_n(\theta_s(\xi^*, \eta^*))ds\\
&=\BE \int_0^{S_1} f_n(\theta_s(\xi^*, \eta^*))ds.
\end{align*}
Send $n\to\infty$ and use the monotone convergence theorem to obtain
\begin{align*}
\BE^*f(\theta_t\theta_{US_1}(\xi^*, \eta^*))=\BE \int_0^{S_1} f(\theta_s(\xi^*, \eta^*))ds.
\end{align*}
So $\BE^*f(\theta_t\theta_{US_1}(\xi^*, \eta^*))$ does not depend on $t$; that is, 
$\theta_{US_1}(\xi^*,\eta^*)$ is stationary under $\BP^*$.

Next we prove the ergodicity assertion.  
It is not hard to prove that
\begin{align}\label{anotherlemma}
((\theta_t\xi)^*,(\theta_t\eta)^*)=(\theta_{\zeta(t)}\xi^*,\theta_{\zeta(t)}\eta^*),\quad t\in\R.
\end{align}
Let $A\subset M\times M$ be a measurable set that is invariant under
diagonal shifts. Then \eqref{anotherlemma} shows for all $(\omega,t)\in\Omega\times\R$
that $(\xi(\theta_t\omega)^*,\eta(\theta_t\omega)^*)\in A$ if and only if
$(\xi(\omega)^*,\eta(\omega)^*)\in A$. Since $\BP$ is ergodic, we obtain
that either $\BP((\xi^*,\eta^*)\in A)=0$ or $\BP((\xi^*,\eta^*)\notin A)=0$.
Therefore, the ergodicity of 
the shifted length-biased version of $(\xi^*,\eta^*)$ 
follows from the two facts that the zero sets of $\BP^*$ 
are the same as those of $\BP$, and that randomly shifted invariant sets remain invariant.

It remains to prove the intensity result. Let 
$\mu_1$ be Lebesgue measure on $\R$.  
Since $\mu_1$ is shift-invariant and 
$\theta_{S_1}\xi^*$ has the same $\BP$-distribution as $\xi^*$, we have
\begin{align*}
\BE(\mu_1 \otimes \xi^*)(A) = \BE(\mu_1 \otimes \xi^*)((S_1, S_1) + A),
\quad A \in \mathcal{B}(\R^2).
\end{align*}

\begin{figure}[h]
\includegraphics[width=0.8 \textwidth]{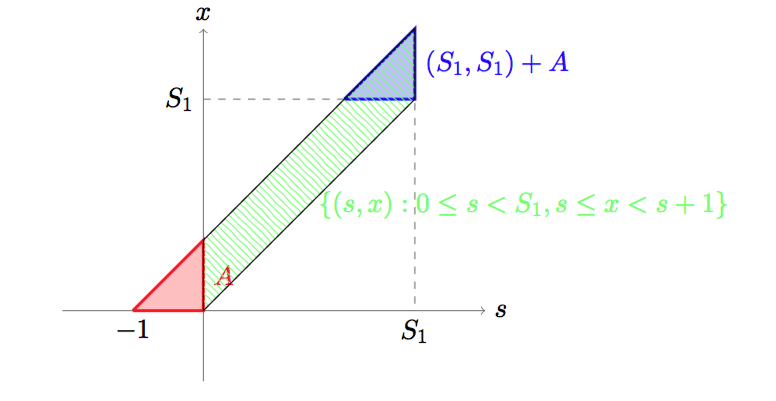}
\caption{The region $\{(s,x) : 0\leq s < S_1, s \leq x < s+1\}$ 
has the same Lebesgue measure as $(\{(s,x) : 0\leq x < S_1, x - 1 < s \le x\}
\setminus A) \cup ((S_1, S_1)+A)$.}
\end{figure}
 
Apply this with   $A = \{(s,x) : -1 \leq s < 0, 0\leq x < s+1 \}$ to obtain
\begin{align*}
\BE^*(&\theta_{US_1}\xi^*)[0,1) = \BE S_1 \int_0^1( \theta_{uS_1}\xi^*)[0,1)du 
= \BE \int_0^{S_1} \xi^*(s+[0,1))ds\\
&= \BE (\mu_1 \otimes \xi^*)(\{(s,x) : 0\leq s < S_1, s \leq x < s+1\})\\
&= \BE (\mu_1 \otimes \xi^*)((\{(s,x) : 0\leq x < S_1, x - 1 < s \le x\}
\setminus A) \cup ((S_1, S_1)+A))\\
&= \BE (\mu_1 \otimes \xi^*)(\{(s,x) : 0\leq x < S_1, x - 1 < s \le x\})\\
&\qquad\qquad\qquad\qquad\qquad\qquad
- \BE (\mu_1 \otimes \xi^*)(A)  + 
\BE (\mu_1 \otimes \xi^*)((S_1, S_1)+A))\\
&= \BE (\mu_1 \otimes \xi^*)(\{(s,x) : 0\leq x<S_1, x-1<s<x\})\\
&= \BE\xi^*[0,S_1)= \BE\xi[0,1).
\end{align*}
In the same way, we obtain $\BE^*(\theta_{US_1}\eta^*)[0,1) = \BE\eta[0,1)$.
\end{proof}

\begin{proof}[Proof of Theorem \ref{tmain1}]
By Theorem \ref{LMT5.1} and Lemma \ref{stat}, the random mapping
$\tau'\colon\R\to[0,\infty]$ defined by
\begin{align*}
\tau'(s):=\inf\{t> s \colon (\theta_{US_1}\xi^*)[s,t]
\le (\theta_{US_1}\eta^*)[s,t]\},\quad s\in\R,
\end{align*}
balances $\theta_{US_1}\xi^*$ and $\theta_{US_1}\eta^*$ under $\BP^*$. 
Since  $\BP^*$ and $\BP$ are equivalent, this implies that 
$\tau'$ balances  $\theta_{US_1}\xi^*$ and $\theta_{US_1}\eta^*$ under $\BP$. 
This remains true if the origin is shifted to some random location.
Recall the definition \eqref{matching*} of $\tau^*$. Shift the origin to $-US_1$ to obtain 
\begin{align*}
\tau^*=\tau'(\cdot-US_1)+US_1,\qquad 
\xi^* = \theta_{-US_1}\theta_{US_1}\xi^*,\qquad
\eta^* = \theta_{-US_1}\theta_{US_1}\eta^*.
\end{align*} 
Thus, $\tau^*$ balances $\xi^*$ and $\eta^*$ under $\BP$. We assume for simplicity
that
\begin{align}\label{*taubalancing}
\int \I\{\tau^*(s)\in\cdot\}\,\xi^*(ds)=\eta^*
\end{align}
holds everywhere on $\Omega$.

We want to prove that
\begin{align}\label{e3.100}
\iint f(t)\,K(s,dt)\,\xi(ds)=\int f(s)\,\eta(ds)
\end{align}
for all measurable $f\colon\R\to[0,\infty)$, implying the theorem.
Applying Lemma \ref{l3} with $\eta$ in place of $\xi$ and then
the balancing property \eqref{*taubalancing} of $\tau^*$ yields
\begin{align*}
\int &f(t)\,\eta(dt)=\int f(\zeta^{-1}(t))\,\eta^*(dt)
=\int \I\{\tau^*(s)<\infty\}f(\zeta^{-1}(\tau^*(s)))\,\xi^*(ds)=I_1+I_2,
\end{align*}
where
\begin{align*}
I_1:=&\int \I\{\tau^*(\zeta(s))<\infty\}f(\zeta^{-1}(\tau^*(\zeta(s)))\,
\xi^c(ds),\\
I_2:=&\iint \I\{0\le u\le 1\}\I\{\tau^*(\zeta(s)+u\xi\{s\}))<\infty\}
f(\zeta^{-1}(\tau^*(\zeta(s)+u\xi\{s\})\,du\,\xi^d(ds).
\end{align*}
Here we have used the definition of $\xi^*$ and a change of variables in
the second summand.

Lemma \ref{l13} implies that
$
I_1=\int \I\{\tau^0(s)<\infty\}f(\tau^0(s))\,\xi^c(ds).
$ 
Note that $\tau^u(s)=\tau^0(s)$ for all $u\in[0,1]$ whenever
$\xi\{s\}=0$.  It follows that
$$
I_1=\iint f(t)K(s,dt)\,\xi^c(ds).
$$
To treat $I_2$, we use Lemma \ref{l14} to obtain that
\begin{align*}
I_2&=
\iint \I\{0\le u<1\}\I\{\tau^{1-u}(s)<\infty\}f(\tau^{1-u}(s))\,du\,\xi^d(ds)
=\iint f(t)K(s,dt)\,\xi^d(ds),
\end{align*}
which proves \eqref{e3.100}.
\end{proof}

\section{On Palm measures and shift-coupling}\label{Palm}

Let  $\xi$ be an invariant random measure on $\R$.
The {\em Palm measure} $\BP_\xi$ of $\xi$
(with respect to $\BP$) is defined by
\begin{align}\label{Palmmeasure} 
\BP_\xi(A):=\BE \int \I_{[0,1]}(s)\I_A(\theta_s)\, \xi(ds),
\quad A\in\mathcal{F}.
\end{align}
This is a $\sigma$-finite measure on $(\Omega,\mathcal{F})$
satisfying the {\em refined Campbell formula}
\begin{align}\label{refC}
\iint f(\theta_s\omega,s)\,\xi(\omega,ds)\, \BP(d\omega)=
\iint f(\omega,s)\,ds\,\BP_\xi(d\omega)
\end{align}
for each measurable $f\colon\Omega\times\R\to [0,\infty)$;
see e.g.\ \cite[Chapter 11]{Kallenberg} and \cite{LaTho09}.
If the {\em intensity} $\BP_\xi(\Omega)$ of $\xi$ is
positive and finite, then $\BP_\xi$ can be normalized to
yield the Palm probability measure of $\xi$. 
This normalization can be interpreted as conditional version of $\BP$ given that
the origin represents a point randomly chosen in the mass
of $\xi$; see \cite{LaTho09,LaTho11}.

The following {\em shift-coupling} result is a consequence of Theorem \ref{tmain1}
and \cite[Theorem 4.1]{LaTho09}.

\begin{proposition}\label{lstt}
Let $\xi$ and $\eta$ satisfy the assumptions of Theorem \ref{tmain1}
and define the allocation maps $\tau^u$, $u\in[0,1]$, by \eqref{umatching}.
Let $T^u:=\tau^u(\cdot,0)$. Then
\begin{align}\label{shift-coupling1} 
\int^1_0\BP_\xi(\theta_{T^u}\in\cdot)\,du=\BP_\eta,
\end{align}
where $\theta_{T^u}\colon\Omega\to\Omega$ is defined by 
$\theta_{T^u}(\omega):=\theta_{{T^u}(\omega)}\omega$, $\omega\in\Omega$.
If, moreover, $\xi$ is diffuse then
\begin{align}\label{shift-coupling2}
\BP_\xi(\theta_{T}\in\cdot)=\BP_\eta,
\end{align}
where $T:=\tau(\cdot,0)$ and the allocation $\tau$ is defined by \eqref{matching}.
\end{proposition}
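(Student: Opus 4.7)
The plan is to derive Proposition~\ref{lstt} as a direct application of the general Palm-inversion principle for invariant balancing transport kernels (\cite[Theorem~4.1]{LaTho09}) to the specific kernel produced by Theorem~\ref{tmain1}. That theorem already supplies the invariant transport kernel
$$K(\omega,s,\cdot) = \int_0^1 \I\{\tau^u(\omega,s)\in\cdot\}\,du$$
balancing $\xi$ and $\eta$; evaluated at the origin this reads $K(\omega,0,\cdot) = \int_0^1 \delta_{T^u(\omega)}\,du$, which is exactly the object appearing on the left-hand side of \eqref{shift-coupling1} once it is pushed forward by the flow.

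The abstract ingredient I would invoke is the Palm-transport identity: for any invariant transport kernel $K$ balancing $\xi$ and $\eta$,
$$\BP_\eta(A) = \iint \I_A(\theta_t\omega)\, K(\omega,0,dt)\,\BP_\xi(d\omega), \quad A\in\mathcal{F}.$$
To verify this I would start from the definition \eqref{Palmmeasure} of $\BP_\eta$, replace $\eta$ by $\int K(\cdot,s,\cdot)\,\xi(ds)$ using the balancing property \eqref{bal}, rewrite $K(\omega,s,dt) = K(\theta_s\omega,0,d(t-s))$ by invariance, and finally apply the refined Campbell formula \eqref{refC} for $\xi$. The substitution $u=t-s$ together with the elementary identity $\int_\R \I_{[0,1]}(s+u)\,ds = 1$ then produces the displayed inversion formula.

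Substituting our specific $K$ and invoking Fubini yields
$$\BP_\eta(A) = \int\!\int_0^1 \I_A(\theta_{T^u(\omega)}\omega)\,du\,\BP_\xi(d\omega) = \int_0^1 \BP_\xi(\theta_{T^u}\in A)\,du,$$
which is \eqref{shift-coupling1}. For the second assertion, I would observe that when $\xi$ is diffuse we have $\xi\{s\}=0$ everywhere, so the definition \eqref{umatching} reduces to $\tau^u(s) = \tau(s)$ for every $u\in[0,1]$ (with $\tau$ given by \eqref{matching}); hence $T^u \equiv T$ and \eqref{shift-coupling1} collapses to \eqref{shift-coupling2}. The only genuine work is bookkeeping the invariance and Campbell manipulations needed to re-derive the Palm-transport identity, but this is routine and exactly the content of \cite[Theorem~4.1]{LaTho09}; I do not anticipate any substantive obstacle.
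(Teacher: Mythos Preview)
Your proposal is correct and follows exactly the approach the paper indicates: the paper simply states that Proposition~\ref{lstt} ``is a consequence of Theorem~\ref{tmain1} and \cite[Theorem~4.1]{LaTho09}'' without further detail, and your argument is precisely the unpacking of that sentence. Your sketch of the Palm-transport identity via the refined Campbell formula is the standard derivation of \cite[Theorem~4.1]{LaTho09}, and the reduction to the diffuse case is handled correctly.
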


\begin{remark}\label{U}\rm
If we extend $(\Omega,\mathcal{F},\BP)$ to support an independent 
random variable $U$ uniformly distributed on $[0,1]$, then 
$\tau^U$ is a {\em randomized} allocation balancing $\xi$
and $\eta$ even when $\xi$ is not diffuse. 
Moreover, \eqref{shift-coupling1} can then be written in the 
same shift-coupling form as \eqref{shift-coupling2}, namely
\begin{align*}
\BP_\xi(\theta_{T^U}\in\cdot)=\BP_\eta.
\end{align*}
\end{remark}

In Brownian excursion theory, it is natural to define Palm measures
of random measures that are not locally finite.
A {\em $\sigma$-finite random measure} $\xi$ is
a kernel from $\Omega$ to $\R$ with the following
property. There exist measurable sets $A_n\in\mathcal{F}$, $n\in\N$,
such that $\cup_n A_n=\Omega$ and
\begin{align}\label{An}
\int \I\{s\in\cdot,\theta_s\in A_n\}\,\xi(ds)
\end{align}
is a random measure for each $n\in\N$.
In this case, the Palm measure $\BP_\xi$ can again be defined by \eqref{Palmmeasure}.
It is $\sigma$-finite and satisfies the refined Campbell formula \eqref{refC};
see \cite{Pitman87} for a special case.
We shall use such a measure in Proposition \ref{propPitman}.

\section{Excursions of Brownian motion}\label{firstexcursion}

In the next two sections, we assume that 
$\Omega$ is the class of all continuous functions $\omega\colon\R\rightarrow\R$
equipped with  the Kolmogorov product $\sigma$-algebra $\mathcal{F}$.
Let $B=(B_t)_{t\in\R}$ denote the identity on $\Omega$.
The flow is given by 
\begin{align}\label{shiftB}
(\theta_t\omega)_s:=\omega_{t+s}.
\end{align}
Let $\BP_0$ denote the  distribution of a two-sided standard Brownian motion. 
Define  $\BP_x:=\BP_0(B+x\in\cdot)$, $x\in\R$, and
the $\sigma$-finite and stationary measure 
\begin{align}\label{statP}
\BP:=\int \BP_x \, dx.
\end{align}
By \cite[Theorem 3.5]{LaMoeTho12} this $\BP$ is ergodic.
Expectations with respect to $\BP_x$ and $\BP$ are denoted
by $\BE_x$ and $\BE_\BP$, respectively. 

Let $t\in\R$. For each real-valued function $w$ whose domain contains $[t,\infty)$,
let
\begin{align*}
D_t(w):=\inf\{s>t: w(s)=0\},
\end{align*}
where $\inf\emptyset:=\infty$. We abbreviate $D(w):=D_0(w)$. Then
$$
R_t:=D(\theta_tB)=D_t(B)-t
$$
is the time taken by $B$ to hit $0$ (starting at time $t$), while 
\begin{align*}
L:=\{t \in\R:R_{t-}=0, R_t>0\},
\end{align*}
is the set of left ends of {\em excursion intervals}. 
The space $E$ of excursions is
the class of all continuous functions $e\colon [0,\infty)\rightarrow\R$
such that $e(0)=0$, $0<D(e)<\infty$, and $e(t)=0$ for all $t\ge D(e)$.
The number $D(e)$ is called the lifetime of the excursion.
We equip $E$ with the Kolmogorov product $\sigma$-field $\mathcal{E}$.
For $s\in L$, define the (random) excursion $\epsilon_s\in E$ 
starting at time $s$ by
\begin{align*}
\epsilon_{s}(t):= 
\begin{cases}
B_{s+t},& \text{if $0\le t \le R_s$},\\
0,&\text{if $t>R_s$}.
\end{cases}            
\end{align*}
It is convenient to introduce the function $\delta\equiv 0$ on $[0,\infty)$,
and to define $\epsilon_s:=\delta$
for $s\notin L$. Then $(\omega,t)\mapsto \epsilon_t(\omega)$
is a measurable mapping with values  in $E_\delta:=E\cup\{\delta\}$.
Note that $D(\delta)=0$.

Define a $\sigma$-finite invariant random measure $N$ on $\R$ by
\begin{align}\label{N}
N(C):=\sum_{s\in L}\I\{s\in C\}, \quad C\in\cB(\R).
\end{align}
Here the invariance is obvious, while we may choose
$A_1:=\{D\in \{0,\infty\}\}$ and $A_n:=\{D\ge 1/n\}$, $n\ge 2$,
in \eqref{An}, to see that $N$ is $\sigma$-finite.
It follows from the refined Campbell formula \eqref{refC} that
\begin{align*}
\BE\int\I\{(s,\epsilon_s)\in\cdot\}\,N(ds)=\iint \I\{(s,e)\in\cdot\}\,ds\,\nu(de),
\end{align*}
where $\nu:=\BP_N(\epsilon_0\in\cdot)$, that is
\begin{align}\label{nu}
\nu(A)=\BE\int_{[0,1]}\I\{\epsilon_s\in A\}\,N(ds),\quad A\in\mathcal{E}.
\end{align}
(Note that $\nu\{\delta\}=0$.)
In fact, Pitman \cite{Pitman87} showed that
$\nu$ coincides with {\em It\^{o}'s excursion measure} (suitably normalized).


For $x\in\R$, we denote by $\ell^x$ the random measure
associated with the {\em local time} of $B$ 
at $x\in\R$ (under $\BP_0$). 
The global construction in \cite{Perkins81}
(see also \cite[Proposition 22.12]{Kallenberg} 
and \cite[Theorem~6.43]{MortersPeres})
guarantees the existence of a version of local times
with the following properties.
The random measure $\ell^0$ is $\BP_x$-a.e.\ diffuse for each $x\in\R$ and  
\begin{align}\label{0inv}
&\ell^0(\theta_t\omega,C-t)=\ell^0(\omega,C),\quad C\in\mathcal{B}(\mathbb{R}),\,t\in\R,\,
\BP_x\text{-a.s.},\,x\in\R,\\
\label{0x}
&\ell^y(\omega,\cdot)=\ell^0(\omega-y,\cdot),\quad \omega\in\Omega,\, y\in\R,\\
\label{0y}
&\supp\ell^x(\omega) =\{t\in\R:B_t=x\},\quad  \omega\in\Omega,\,x\in\R,
\end{align}
where $\supp\mu$ is the support of a measure $\mu$ on $\R$.
Equation \eqref{0x} implies that $\ell^y$ is $\BP_x$-a.e.\ diffuse for 
each $x\in\R$ and is invariant in the sense of \eqref{0inv}.
By a classical result from \cite{GeHo73} (see also \cite[Lemma 2.3]{LaMoeTho12}),
the Palm measure of $\ell^x$ is given by
\begin{align}\label{palmlocaltime}
\BP_{\ell^x}=\BP_x,\quad x\in\R.
\end{align}

For $t\ge 0$, let $\ell_t^0:=\ell^0([0,t])$.
Define the right inverse of
$\ell^0$ by $\tau_s:=\inf\{t\ge 0: \ell^0_t>s\}$ for
$s\ge 0$. By \eqref{0y},
\begin{align*}
\{t\ge 0:B_t\ne 0\}=\bigcup_{s>0} (\tau_{s-}, \tau_s).
\end{align*}
If $\tau_{s-}<\tau_s$, then $(\tau_{s-},\tau_s)$ is an excursion
interval away from $0$. 
A classical result of It\^{o} \cite{Ito71} (see also \cite[Theorem 22.11]{Kallenberg}
and \cite[Theorem XII(2.4)]{RevuzYor99}) shows that the random measure
\begin{align}\label{Poissonexcursion}
\Phi:=\sum_{s> 0:\tau_{s-}<\tau_s}\delta_{(s,\epsilon_{\tau_{s-}})}
\end{align}
is a Poisson process on $(0,\infty)\times E$ under $\BP_0$ with intensity measure
\begin{align*}
\BE_0\sum_{s> 0:\tau_{s-}<\tau_s}\I\{(s,\epsilon_{\tau_{s-}})\in\cdot\}
=\int_E\int^\infty_0 \I\{(s,e)\in\cdot\}\,ds\,\nu(de).
\end{align*}
The excursion measure $\nu$ satisfies
\begin{align}\label{e11}
\nu(D\in dr)=cr^{-3/2}\,dr
\end{align}
for some constant $c>0$;
see \cite[Section XII.2]{RevuzYor99} or \cite[Theorem 22.5]{Kallenberg}.

In the next section, we return to the problem discussed in the introduction
of finding an {\em extra} $A$-{\em excursion}.
Before embarking on this
by means of Palm and transport theory, we check what happens 
if we simply choose for a given $A\in\mathcal{E}$ the first excursion 
belonging to $A$ to the right of the origin. 
For a measure $\mu$ and a set $A$ such that $0<\mu(A)<\infty$, 
we define the conditional measure $\mu(\cdot \mid A)=\mu(\cdot \cap A)/\mu(A)$.
The following well-known result can be derived with the help of
excursion theory; see \cite[Lemma XII(1.13)]{RevuzYor99}.
It is a special case of \eqref{e6.2}, to be proved below.

\begin{proposition} \label{naiveembedding}
Let $A\in\mathcal{E}$ satisfy $0<\nu(A)<\infty$ and define
$$
S_A:=\inf\{t>0:t\in L,\epsilon_t\in A\}.
$$
Then $\BP_0(\epsilon_{S_A}\in \cdot)=\nu(\cdot \mid  A)$.
\end{proposition}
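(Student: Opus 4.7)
The plan is to reduce the claim to a standard property of Poisson point processes by exploiting the It\^{o} excursion representation \eqref{Poissonexcursion}: under $\BP_0$, the process $\Phi=\sum_{s>0:\tau_{s-}<\tau_s}\delta_{(s,\epsilon_{\tau_{s-}})}$ is Poisson on $(0,\infty)\times E$ with intensity $ds\otimes \nu(de)$. I aim to identify $\epsilon_{S_A}$ with the $E$-valued coordinate of the first atom of $\Phi$ whose second coordinate lies in $A$, and then read off the conclusion from the restriction theorem.

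First I would translate $S_A$ into local-time coordinates. Since $s\mapsto \tau_{s-}$ is non-decreasing, the real-time ordering of excursion intervals coincides with their local-time ordering; therefore, setting
\begin{align*}
S_A^\ell := \inf\{s>0:\tau_{s-}<\tau_s,\ \epsilon_{\tau_{s-}}\in A\},
\end{align*}
one has $S_A=\tau_{S_A^\ell-}$ and $\epsilon_{S_A}=\epsilon_{\tau_{S_A^\ell-}}$, so $\epsilon_{S_A}$ is precisely the excursion-valued coordinate of the first atom of $\Phi$ in $(0,\infty)\times A$. Next I would invoke the restriction theorem: the restriction of $\Phi$ to $(0,\infty)\times A$ is Poisson with intensity $ds\otimes \nu(\cdot\cap A)$, and since $0<\nu(A)<\infty$ this is equivalent to a Poisson process on $(0,\infty)$ with rate $\nu(A)$ carrying i.i.d.\ marks of law $\nu(\cdot\mid A)$, independent of the points. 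The mark of the first atom then has distribution $\nu(\cdot\mid A)$, yielding the claim.

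The main technical point will be to justify the identification $S_A=\tau_{S_A^\ell-}$ and, in particular, that the infimum in the definition of $S_A$ is $\BP_0$-a.s.\ attained. The first follows from the support property \eqref{0y} together with the continuity and non-decreasingness of $\ell^0$: any $t\in L$ satisfies $\tau_{s-}=t<\tau_s$ for $s=\ell^0_t$, so the parametrisation $s\mapsto \tau_{s-}$ enumerates exactly the left endpoints in $L$. The second is immediate from $\nu(A)>0$, which forces $\Phi$ to have infinitely many atoms in $(0,\infty)\times A$ and hence a smallest one.
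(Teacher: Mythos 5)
Your argument is correct, but it takes a genuinely different route from the paper. You work directly with the It\^{o} representation \eqref{Poissonexcursion}: after using the support property \eqref{0y} and the continuity/monotonicity of $\ell^0$ to check that $s\mapsto\tau_{s-}$ enumerates the left endpoints in $L\cap(0,\infty)$ in an order-preserving way, you identify $\epsilon_{S_A}$ with the mark of the time-smallest atom of $\Phi$ in $(0,\infty)\times A$ and conclude via the restriction/marking theorem that this mark has law $\nu(\cdot\mid A)$; this is essentially the classical excursion-theoretic proof (the route behind the Revuz--Yor lemma the paper cites) which the paper alludes to but does not spell out. The paper instead obtains the proposition as a special case of the stronger identity \eqref{e6.2}, derived from its Palm machinery: Proposition \ref{propPitman} gives $\BP_N=\BP^-\odot\nu\odot\BP^+$, hence Corollary \ref{c12} for $N_A$, and Neveu's exchange formula (Proposition \ref{pneveu}) turns this into \eqref{e6.2}, which yields not only the law of $\epsilon_{S_A}$ but also its independence from $(\theta_{S_A}B)^-$ and the length-biased local-time factor for the pre-$S_A$ segment. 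So your proof is shorter and more elementary, but delivers only the marginal law, whereas the paper's route buys the joint factorization used in the discussion of biased embedding. One small point of precision: the existence of a smallest atom of $\Phi$ in $(0,\infty)\times A$ does not follow from there being infinitely many atoms (as your last sentence suggests), but from $\nu(A)<\infty$, which makes the restricted time process a rate-$\nu(A)$ Poisson process on $(0,\infty)$, hence locally finite with an a.s.\ first point, while $\nu(A)>0$ only guarantees nonemptiness; since both hypotheses are in force and already invoked in your restriction step, this is a wording issue rather than a gap.
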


\begin{figure}[h]
\hspace{0.8cm}
\includegraphics[width=0.9 \textwidth]{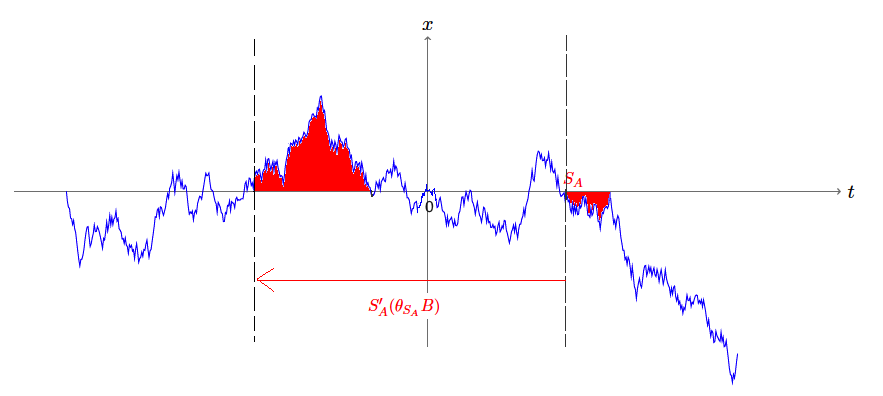}
\caption{The times $S_A$ and $S_A^{'}(\theta_{S_A}B)$ in a two-sided Brownian motion.}
\end{figure}

Thus, $\epsilon_{S_A}$ is an $A$-excursion. Also, with $X_A$ the length of 
$\epsilon_{S_A}$,
an independent standard Brownian motion $((\theta_{S_A+X_A}B)_s)_{s\ge 0}$
starts at its right endpoint. However, the process
$((\theta_{S_A}B)_{-s})_{s\ge 0}$ is not a standard Brownian motion because it
starts with a path segment of positive length $S_A$ without an
$A$-excursion and then at time $S_A$ an independent standard Brownian
motion starts. 
In fact, it follows from the Poisson nature of the point process \eqref{Poissonexcursion}
that both $\ell^0([0,S_A])$ and $\ell^0([S'_A,0])$ have (under $\BP_0$) an exponential
distribution with rate parameter $\nu(A)$, 
where $S'_A:=\sup\{t<0:t\in L,\epsilon_t\in A\}$.
Since these random variables are independent,
it follows that the local time accumulated by $\theta_{S_A}B$
on the interval $[S'_A(\theta_{S_A}B),0]$
has a Gamma distribution with shape parameter $2$.
Hence the embedding of the conditional It\^{o} measure $\nu(\cdot \mid  A)$ in 
Proposition~\ref{naiveembedding} is not {\em unbiased}, that is, $\epsilon_{S_A}$ 
is not an {\em extra} $A$-excursion.

%
 
\section{Finding an extra excursion}\label{extra}

Let $A$ be a measurable set of excursions with positive and
finite It\^{o} measure. In this
section, we will use Proposition~\ref{lstt}  (with $\xi = \ell^0$) for unbiased 
embedding of an $A$-excursion; see Theorem \ref{tpathdeito}.
For this purpose, we need to show that 
under the Palm measure of $N$ (see \eqref{N}), the 
Brownian motion is decomposed into three independent 
pieces: a time reversed Brownian motion on $(-\infty,0]$, an excursion
with `distribution' $\nu$, and a Brownian motion starting after this
excursion.  A formal statement of this result requires some notation.

Let $\Omega^-$ (resp.\ $\Omega^+$) denote the space of all continuous
functions $w$ on $(-\infty,0]$ (resp.\ $[0,\infty)$) with $w(0)=0$. The
{\em concatenation} of $w_1\in\Omega^-$,$w_2\in E$ and $w_3\in\Omega^+$ 
is the function $w_1\odot w_2\odot w_3\in\Omega$ defined by
\begin{align*}
w_1\odot w_2\odot w_3(t):=
\begin{cases}
w_1(t),& \text{if $t \le 0$},\\
w_2(t),&\text{if $0<t<D(w_2)$},\\
w_3(t-D(w_2)),&\text{if $D(w_2)\le t$}.
\end{cases}            
\end{align*}
The {\em concatenation} of $\sigma$-finite measures $\nu_1$ on $\Omega^-$, $\nu_2$ on  $E$, 
and $\nu_3$ on $\Omega^+$
is the measure  $\nu_1\odot \nu_2\odot \nu_3$ on $\Omega$ defined by
\begin{align*}
\nu_1\odot \nu_2\odot \nu_3:=\iiint \I\{w_1\odot w_2\odot w_3\in\cdot\}\,
\nu_1(dw_1)\,\nu_2(dw_2)\,\nu_3(dw_3).
\end{align*}
Let $\BP^-$ (resp.\ $\BP^+$) denote the law of $B^-:=(B_t)_{t\le 0}$ 
(resp.\ $B^+:=(B_t)_{t\ge 0}$).

The following proposition will be proved below. It extends a result from Pitman \cite{Pitman87}.

\begin{proposition}\label{propPitman} 
The Palm measure of $N$ is given by $\BP_N=\BP^-\odot \nu \odot \BP^+$. 
\end{proposition}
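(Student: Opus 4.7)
The plan is to verify $\BP_N = \BP^- \odot \nu \odot \BP^+$ by a Palm-theoretic computation combining It\^o's excursion Poisson theorem \eqref{Poissonexcursion}, the identity $\BP_{\ell^0} = \BP_0$ from \eqref{palmlocaltime}, and the Markov/time-reversal structure of Brownian motion together with the integration $\int \BP_x\,dx$ in the definition \eqref{statP} of $\BP$. By the refined Campbell formula \eqref{refC},
\[
\int f \, d\BP_N = \BE_\BP \sum_{s \in L \cap [0,1]} f(\theta_s B),
\]
and by a monotone class argument it suffices to establish the identity on product test functions $f(w) = f_-(w^-) \, f_0(w^0) \, f_+(w^+)$ corresponding to the decomposition $w = w^- \odot w^0 \odot w^+$ into past, initial excursion, and post-excursion tail.

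For the excursion and post-excursion factors, I would invoke It\^o's theorem \eqref{Poissonexcursion}: under $\BP_0$ the excursions indexed by local time at zero form a Poisson point process with intensity $du\,\nu(de)$. For each left endpoint $s = \tau_{u-}$, the time $\tau_u = s + D(\epsilon_s)$ is a stopping time with $B_{\tau_u} = 0$, so strong Markov at $\tau_u$ gives that $(B_{\tau_u + t})_{t \ge 0}$ is a standard Brownian motion from $0$ independent of $\mathcal{F}_{\tau_u}$, producing the decoupled factor $\int f_+ \, d\BP^+$. A Campbell--Mecke computation for the Poisson excursion process then yields the excursion factor $\int f_0 \, d\nu$, conditionally independent of the pre-excursion past $\mathcal{F}_{s^-}$.

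The main obstacle is identifying the past marginal under $\BP_N$ as $\BP^-$. The strategy is to unfold the Palm sum using $\BP = \int \BP_x\,dx$ and to identify the distribution of $(B_{s+t})_{t \le 0}$ at a Palm-typical left endpoint $s$. Under $\BP_x$, the forward segment $(B_v)_{0 \le v \le s}$ connects $B_0 = x$ to $B_s = 0$, while $(B_v)_{v \le 0}$ is an independent backward Brownian motion from $x$. Integrating over $x$ against Lebesgue measure and restricting via the Palm measure of $N$ to the event that $s$ is a left endpoint (so that in particular $B_s = 0$), the endpoint $x = B_0$ acquires the Gaussian density $p_s(x)\,dx$, and conditional on its endpoints the forward segment is a Brownian bridge from $x$ to $0$ of length $s$. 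Time-reversing this bridge and concatenating with the independent backward Brownian motion from $x$, then invoking the Markov property to glue at the random intermediate value $x$, reconstitutes a standard Brownian motion on $[0, \infty)$ started at $0$. Re-expressed in the original parametrization, this shows that $(B_{s + t})_{t \le 0}$ has the $\BP^-$-distribution, independent of the excursion and post-excursion. Combining the three factors yields $\BP_N = \BP^- \odot \nu \odot \BP^+$.
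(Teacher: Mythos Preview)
Your outline names the right tools, and the Campbell--Mecke\,/\,strong-Markov treatment of the excursion and post-excursion pieces is on the right track. The gap is the past.

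The bridge step is wrong. A left endpoint $s\in L$ satisfies $R_{s-}=0$, so it is an accumulation point of zeros from the left; under $\BP_x$ with $x\ne 0$ the segment $(B_v)_{0\le v\le s}$ therefore hits $0$ at the first-passage time $S<s$ and again in every left neighbourhood of $s$. Its law is \emph{not} that of a Brownian bridge from $x$ to $0$ over $[0,s]$, which stays away from $0$ on $(0,s)$ with positive probability. More basically, your argument treats $s$ as a deterministic time at which one may condition on $\{B_s=0\}$ and read off a density $p_s(x)$ for $B_0$; but the Palm formula is a sum over random, path-dependent points $s$, and you supply no disintegration in $s$ that would justify this reduction. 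The asserted independence of the past from the inserted excursion is likewise left unproved.

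The paper does not isolate the past by a separate argument. It reconstructs $B$ beyond the first zero $S$ as a measurable function $B'$ of an auxiliary Poisson process $\Phi'$ of excursions independent of $B^S$, and applies the Mecke equation once to $\Phi'$. Adding a point $(s,e)$ to $\Phi'$ inserts the excursion $e$ at real time $S+\tau'(s-)$ and gives the exact splitting
\[
\theta_{S+\tau'(s-)}B'(B^S,\Phi'+\delta_{(s,e)})
=(\theta_{S+\tau'(s-)}B')^-\odot e\odot (\theta_{S+\tau'(s-)}B')^+,
\]
so all three factors (including the past) emerge from a single computation. A change of variables from local time $s$ to real time $t=\tau_s$ converts the resulting $ds$-integral into an $\ell^0(dt)$-integral over $(0,1]$; integrating over $x$ and invoking $\BP_{\ell^0}=\BP_0$---the identity you cite in your plan but never actually deploy---immediately yields $\BP_N=\BP^-\odot\nu\odot\BP^+$. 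The past factor comes out as $\BP^-$ because under the Palm measure of local time the shifted process is already a two-sided Brownian motion from~$0$; no bridge-and-glue construction is needed, and the one you propose does not apply.
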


Let $A\in\mathcal{E}$ be such that $0<\nu(A)<\infty$. Define
an invariant random measure $N_A$ on $\R$ by
\begin{align}\label{NA}
N_A(C):=\frac{1}{\nu(A)}\int\I\{s\in C,\epsilon_s\in A\}\,N(ds), \quad C\in\cB(\R),
\end{align}
We then have the following immediate consequence of Proposition \ref{propPitman}.

\begin{corollary}\label{c12} Let $A\in\mathcal{E}$ satisfy $0<\nu(A)<\infty$
and define the random measure $N_A$ by \eqref{NA}. 
Then the Palm measure of $N_A$ is given by $\BP_{N_A}=\BP^-\odot \nu(\cdot|A) \odot \BP^+$.
\end{corollary}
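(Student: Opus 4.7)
My plan is to reduce the claim to Proposition \ref{propPitman} by expressing the Palm measure of $N_A$ as a (restricted and normalized) version of the Palm measure of $N$, and then exploiting the product form $\BP_N=\BP^-\odot\nu\odot\BP^+$.

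First, I would observe that the excursion-at-$s$ map has the shift-covariance property $\epsilon_s(\omega)=\epsilon_0(\theta_s\omega)$ on the set $\{s\in L\}$, and that $L$ itself is shift-invariant in the sense $s\in L(\omega)\Leftrightarrow 0\in L(\theta_s\omega)$. Consequently, writing $A':=\{\omega\in\Omega:\epsilon_0(\omega)\in A\}\in\mathcal{F}$, the integrand $\I\{\epsilon_s\in A\}$ appearing in the definition \eqref{NA} of $N_A$ coincides with $\I_{A'}(\theta_s)$ on the support of $N$. This lets me rewrite, for $F\in\mathcal{F}$,
\begin{align*}
\BP_{N_A}(F)
=\frac{1}{\nu(A)}\BE\int\I_{[0,1]}(s)\,\I_{F}(\theta_s)\I_{A'}(\theta_s)\,N(ds)
=\frac{1}{\nu(A)}\BP_N(F\cap A'),
\end{align*}
using the definition \eqref{Palmmeasure} of the Palm measure of $N$.

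Next, I would invoke Proposition \ref{propPitman} to replace $\BP_N$ by $\BP^-\odot\nu\odot\BP^+$. Under this concatenation description, a path $\omega=w_1\odot w_2\odot w_3$ satisfies $0\in L(\omega)$ automatically (since $w_2\in E$ is a genuine excursion with $0<D(w_2)<\infty$), and $\epsilon_0(\omega)=w_2$. Therefore $\I_{A'}(w_1\odot w_2\odot w_3)=\I_A(w_2)$, which gives
\begin{align*}
\BP_N(F\cap A')
=\iiint \I_F(w_1\odot w_2\odot w_3)\,\I_A(w_2)\,
\BP^-(dw_1)\,\nu(dw_2)\,\BP^+(dw_3).
\end{align*}
Pulling out the factor $\nu(A)$ from the $w_2$-integration and dividing yields exactly $\BP^-\odot\nu(\cdot\mid A)\odot\BP^+$ applied to $F$.

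The only nontrivial step is the identity $\I\{\epsilon_s\in A\}=\I_{A'}(\theta_s)$ under $N(ds)$-integration, which is purely a matter of unwinding definitions, so I do not expect a real obstacle here; the substantive content is entirely in Proposition \ref{propPitman}, and this corollary is a conditional-probability bookkeeping argument on top of it.
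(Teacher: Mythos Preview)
Your proposal is correct and is precisely the natural unpacking of what the paper states only as ``an immediate consequence of Proposition~\ref{propPitman}'': you restrict the Palm measure of $N$ to the event $\{\epsilon_0\in A\}$ via the shift-covariance $\epsilon_s=\epsilon_0\circ\theta_s$ on $\supp N$, and then read off the conditional excursion law from the product form $\BP_N=\BP^-\odot\nu\odot\BP^+$. There is no alternative route to compare; the paper gives no explicit argument, and your write-up is exactly the intended one-line deduction made rigorous.
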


Recall the definition 
$S'_A:=\sup\{t<0:t\in L,\epsilon_t\in A\}$, $A\in\mathcal{E}$.

\begin{remark}\label{pointstat}\rm
Let $A\in\mathcal{E}$ satisfy $0<\nu(A)<\infty$. Under its 
Palm measure $\BP_{N_A}$, the random measure $N_A$
is {\em point-stationary}, that is distributionally invariant
under the (random) shifts $\theta_{S_A}$ and $\theta_{S'_A}$; 
see \cite{Thor96,Thor00,LaTho09}.
\end{remark}

Together with Corollary \ref{c12} and Remark \ref{pointstat} the following result
can be used to describe the splitting of a Brownian motion
into independent segments (see the introduction) in a
rigorous manner. 
\begin{proposition}\label{pneveu} Let $A$ be as in Corollary \ref{c12}
and suppose that $f\colon \Omega\to[0,\infty)$ is measurable.
Then
\begin{align*}
\BE_0 f=\nu(A)\BE_{N_A}\int f\circ\theta_t \I\{S'_A\le t\le 0\}\,\ell^0(dt). 
\end{align*}
\end{proposition}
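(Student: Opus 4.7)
The plan is to derive the identity by unfolding the Palm expectation on the right, changing variables via the flow, identifying the resulting indicator with a single ``first $A$-excursion after'' event, and then reading the answer off the refined Campbell formula for $\ell^0$ together with $\BP_{\ell^0}=\BP_0$.

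Step 1: Unfold. By \eqref{Palmmeasure} and \eqref{NA},
\[
\nu(A)\,\BE_{N_A}\!\int f\circ\theta_t\,\I\{S'_A\le t\le 0\}\,\ell^0(dt)
 =\BE\!\int_{[0,1]}\!\!\Bigl[\!\int f(\theta_{t+u})\I\{S'_A(\theta_u)\le t\le 0\}\,\ell^0(\theta_u,dt)\Bigr]\I\{\epsilon_u\in A\}\,N(du).
\]
Using the invariance of $\ell^0$ I substitute $s=t+u$; moreover, when $u\in L_A$ one has $S'_A(\theta_u)+u=u^-$, the largest element of $L_A$ strictly below $u$. This rewrites the right-hand side as
\[
\BE\int_{[0,1]}\Bigl[\int f(\theta_s)\I\{u^-\le s\le u\}\,\ell^0(ds)\Bigr]\I\{\epsilon_u\in A\}\,N(du).
\]

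Step 2: Fubini and the partition identity. Since $0<\nu(A)<\infty$, the set $L_A$ is a.s.\ locally finite and the intervals $\{(u^-,u]:u\in L_A\}$ partition $\R$. For $\ell^0$-a.e.\ $s$ there is therefore a unique $u(s)\in L_A$ with $u(s)^-\le s\le u(s)$, namely the first $A$-excursion left-end at or after~$s$. Swapping the order of integration turns the previous expression into
\[
\BE\int f(\theta_s)\,\I\{u(s)\in[0,1]\}\,\ell^0(ds).
\]

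Step 3: Equivariance and refined Campbell. Equivariance of $L_A$ gives $u(s)=s+T(\theta_s)$ with $T(\omega):=\inf\{u\ge 0:u\in L_A(\omega)\}$, which is $\BP_0$-a.s.\ finite by It\^o's description of the excursion point process with rate $\nu(A)$. Applying the refined Campbell formula \eqref{refC} to $\ell^0$ with the integrand $h(\omega,s)=f(\omega)\I\{s+T(\omega)\in[0,1]\}$ and using \eqref{palmlocaltime}, I obtain
\[
\BE\int f(\theta_s)\I\{s+T(\theta_s)\in[0,1]\}\,\ell^0(ds)=\int \BE_0\!\left[f\,\I\{s\in[-T,1-T]\}\right]ds=\BE_0 f,
\]
since the inner Lebesgue integral equals $1$. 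Chaining the three equalities proves the proposition.

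The only non-routine step is Step 2: one must check that on the support of $\ell^0$ the indicators $\I\{u^-\le s\le u,\,\epsilon_u\in A\}N(du)$ collapse to $\I\{u(s)\in[0,1]\}$. This uses diffuseness of $\ell^0$ (so that the $\ell^0$-null set $L_A$ and the common endpoints $\{u,u^-\}$ are irrelevant) together with local finiteness of $L_A$ guaranteed by $\nu(A)<\infty$; the rest of the argument is bookkeeping with the flow and Campbell's formula.
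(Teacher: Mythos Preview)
Your proof is correct, but it takes a different route from the paper's. The paper dispatches the identity in three lines by invoking Neveu's exchange formula between the Palm measures $\BP_{\ell^0}=\BP_0$ and $\BP_{N_A}$: choosing $h(\theta_0,t)=\I\{0<t\le S_A\}f$ in
\[
\BE_0\int h(\theta_0,t)\,N_A(dt)=\BE_{N_A}\int h(\theta_t,-t)\,\ell^0(dt)
\]
and observing that $0\le -t\le S_A\circ\theta_t$ is equivalent to $S'_A\le t\le 0$ gives the result immediately. Your argument, by contrast, unfolds the Palm definition of $\BE_{N_A}$, swaps integrals to reduce the inner $N_A$-sum to a single indicator via the partition $\{(u^-,u]:u\in L_A\}$, and then closes with the refined Campbell formula for $\ell^0$. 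This is in effect a self-contained derivation of the specific instance of Neveu's formula that the paper cites: more work, but no external reference needed. Both approaches ultimately rest on refined Campbell and the identification $\BP_{\ell^0}=\BP_0$; the paper's version is shorter because the bookkeeping you do in Steps~1--2 has already been absorbed into the exchange formula.
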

\begin{proof} By \eqref{palmlocaltime} we have $\BP_{\ell^0}=\BP_0$.
Therefore, taking a measurable $h\colon\Omega\times\R\to[0,\infty)$,
we obtain from Neveu's exchange formula (see e.g.\ \cite{LaTho09}) that
\begin{align*}
\BE_0\int h(\theta_0,t)\,N_A(dt)=\BE_{N_A}\int h(\theta_t,-t)\,\ell^0(dt).
\end{align*}
We apply this formula
with $h(\theta_0,t)=\I\{0<t\le S_A\}f$ to obtain that
\begin{align*}
\nu(A)^{-1}\BE_0 f=\BE_{N_A}\int f(\theta_t)\I\{0\le -t\le S_A\circ\theta_t\}\,\ell^0(dt).
\end{align*}
It remains to note that $0\le -t\le S_A\circ\theta_t$ iff $S'_A\le t\le 0$.
\end{proof}

\begin{remark}\label{rfirstexcursion}\rm
Let $g\colon \Omega^-\to[0,\infty)$ and $h\colon E\to[0,\infty)$
be measurable functions.
Combining Proposition \ref{pneveu} with Corollary \ref{c12}
shows after a short calculation that
\begin{align}\label{e6.2}
\BE_0 g((\theta_{S_A}B)^-)h(\epsilon_{S_A})
=\nu(A)\BE_0\big[\ell^0((S'_A,0])g(B^-)\big]\int h(e)\,\nu(de\mid A).
\end{align}
In particular, $(\theta_{S_A}B)^-$ and $\epsilon_{S_A}$ are independent, as asserted
in the introduction. Moreover, $\epsilon_{S_A}$ has distribution $\nu(\cdot \mid A)$,
as asserted by Proposition \ref{naiveembedding}.
\end{remark}

The following path decomposition of a two-sided Brownian motion
is the main result of this section.

\begin{theorem}\label{tpathdeito}
Let $A \in \mathcal{E}$ be such that $0<\nu(A)<\infty$
and define the random measure $N_A$ by \eqref{NA}. Let
\begin{align*}
T:=\inf\{t>0: \ell^0[0,t]\le N_A[0,t]\}.
\end{align*}
Then $\BP_0(T<\infty)=1$ and $\BP_0(\theta_T B \in \cdot)=\BP^-\odot \nu(\cdot|A) \odot \BP^+$.
\end{theorem}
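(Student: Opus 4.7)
My plan is to derive Theorem \ref{tpathdeito} as an essentially immediate consequence of the shift-coupling statement \eqref{shift-coupling2} in Proposition \ref{lstt}, applied to $\xi := \ell^0$ and $\eta := N_A$. Everything will follow once the hypotheses of that proposition are verified and the Palm measures on both sides are identified.

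The first step is to check the hypotheses of Theorem \ref{tmain1} (which Proposition \ref{lstt} relies on). Both $\ell^0$ and $N_A$ are invariant random measures on $\R$: $\ell^0$ is flow-invariant by \eqref{0inv}, while $N_A$ inherits invariance from $N$ and from the flow-equivariance of $s\mapsto \epsilon_s$. Moreover, $\ell^0$ is $\BP$-a.e.\ diffuse (since $\BP = \int \BP_x\,dx$ and $\ell^0$ is $\BP_x$-a.e.\ diffuse for every $x$), whereas $N_A$ is purely atomic; hence $\ell^0$ and $N_A$ are mutually singular. For the equality of intensities, I use \eqref{palmlocaltime} to get $\lambda_{\ell^0} = \BP_{\ell^0}(\Omega) = \BP_0(\Omega) = 1$, and I use Corollary \ref{c12} to get $\lambda_{N_A} = \BP_{N_A}(\Omega) = (\BP^-\odot\nu(\cdot|A)\odot\BP^+)(\Omega) = 1$ (each factor is a probability measure). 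The joint stationarity and ergodicity are inherited from the corresponding property of $\BP$ under the flow.

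Next, since $\ell^0$ is diffuse, the allocation $\tau$ defined by \eqref{matching} applies, and with $\xi = \ell^0$, $\eta = N_A$ this allocation coincides with
\[
\tau(s) = \inf\{t>s\colon \ell^0[s,t]\le N_A[s,t]\}.
\]
In particular $T = \tau(0)$ equals the time defined in the statement. The second conclusion \eqref{shift-coupling2} of Proposition \ref{lstt} gives
\[
\BP_{\ell^0}(\theta_T \in \cdot) = \BP_{N_A}.
\]
Substituting $\BP_{\ell^0} = \BP_0$ on the left and $\BP_{N_A} = \BP^-\odot\nu(\cdot|A)\odot\BP^+$ (Corollary \ref{c12}) on the right yields the desired identity $\BP_0(\theta_TB\in\cdot) = \BP^-\odot\nu(\cdot|A)\odot\BP^+$.

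The finiteness statement $\BP_0(T<\infty)=1$ also comes for free: since $\tau$ balances $\ell^0$ and $N_A$ (Theorem \ref{tmain2}), the set $\{s\colon \tau(s)=\infty\}$ has $\ell^0$-measure zero $\BP$-a.e., which translates via the refined Campbell formula \eqref{refC} into $\BP_{\ell^0}(\tau(0)=\infty)=0$, i.e.\ $\BP_0(T=\infty)=0$. The main thing to be careful about is really just the bookkeeping of intensities and the identification of the two Palm measures via \eqref{palmlocaltime} and Corollary \ref{c12}; neither of these is an obstacle once Proposition \ref{propPitman} is in hand. No additional calculation or construction is needed.
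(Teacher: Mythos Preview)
Your proposal is correct and follows essentially the same route as the paper: verify mutual singularity (diffuse vs.\ purely discrete), equality of intensities via \eqref{palmlocaltime} and Corollary \ref{c12} (the paper cites Proposition \ref{propPitman} directly, which amounts to the same thing), then invoke Proposition \ref{lstt} and identify the two Palm measures. Your explicit derivation of $\BP_0(T<\infty)=1$ from the balancing property via the refined Campbell formula is a nice spelling-out of a detail the paper leaves implicit.
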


\begin{proof}[Proof of Proposition \ref{propPitman}]

Unless stated otherwise, we fix $x\in\R$.
For the purpose of this proof, it is convenient to enlarge 
the probability space $(\Omega,\mathcal{F},\BP_x)$ to a probability space
$(\Omega',\mathcal{F}',\BP'_x)$, so as to support a Poisson process $\Phi'$ on
$(0,\infty)\times E$ with intensity measure $dt\nu(de)$, independent
of $B$. 
Define,
\begin{align}\label{tauprime}
\tau'(t):=\int\I\{s\le t\}D(e)\,\Phi'(d(s,e)),\quad t\ge 0.
\end{align}
By \eqref{e11}, $\int \min\{D(e),1\}\,\nu(de)<\infty$. Hence 
\cite[Lemma 12.13]{Kallenberg} (see also
\cite[Proposition 12.1]{LastPenrose17})
shows that the integral \eqref{tauprime}
converges $\BP'_x$-a.e.\ for each $t\ge 0$. Moreover, the process $t\mapsto \tau'(t)$ has
limits from the left, given by
\begin{align*}
\tau'(t-):=\int\I\{s< t\}D(e)\,\Phi'(d(s,e)),\quad t>0.
\end{align*}
Set $\tau'(0-):=0$. Below we will write $\tau'(t)=\tau'(\Phi',t)$ and
$\tau'(t-)=\tau'(\Phi',t-)$.
Equation \eqref{e11} also implies that $\nu(D>r)>0$ for each $r>0$, so that
$\tau'(t)\to\infty$ as $t\to\infty$ holds $\BP'_x$-a.s.

Motivated by \cite[Proposition XII.(2.5)]{RevuzYor99} we now define
a process $W=(W_t)_{t\ge 0}$ as follows. Set $W_0:=0$. Let $t> 0$.
Then there exists $s> 0$ such that $\tau'(s-)\le t<\tau'(s)$.
By definition \eqref{tauprime}, there exists $e\in E$ such
that $\Phi'\{(s,e)\}>0$. Set
$$
W_t:=e(t-\tau'(s-)).
$$
The process $W$ is a measurable function of $\Phi'$. We abuse the
notation and write $W\equiv W(\Phi')$.
By \cite[Proposition XII.(2.5)]{RevuzYor99} and the fact that
\eqref{Poissonexcursion} has the same distribution as $\Phi'$ it follows
that $\BP'_0(W\in\cdot)=\BP_0(B^+\in\cdot)$ is the distribution
of a Brownian motion starting from $0$. In fact, even more is true.
Let $S:=\inf\{t\ge 0:B_t=0\}$ and let $B^S$ be the process
$B$ stopped at $S$; that is $B^S_t:=B_t$ if $t\le S$, and $B^S_t=0$
otherwise. Using the strong Markov property at $S$ together with
\eqref{0inv} and $\ell^0(S)=0$, we obtain that the random measure
$
\sum_{s> 0:\tau_{s-}<\tau_s}\delta_{(s-S,\epsilon_{\tau_{s-}})}
$
is a Poisson process on $(0,\infty)\times E$ under $\BP'_x$ with intensity measure
$dt\,\nu(de)$, independent of $B^S$.
Define a process $B'=(B'_t)_{t\in\R}$ by
\begin{align*}
B'_t:=
\begin{cases}
B_t,&\text{if $t\le S$,}\\
W_{t-S},&\text{if $t> S$}.
\end{cases}
\end{align*}
Then $B'$ is a measurable function of $(B^S,\Phi')$ and we can write 
$B'\equiv B'(B^S,\Phi')$. Now we have
\begin{align}\label{e12}
\BP'_x(B'\in\cdot)=\BP_x,\quad x\in\R.
\end{align}
Let $(s,e)\in(0,\infty)\times E$. A careful check of the definitions shows that
\begin{align*}
B'_t(B^S,\Phi'+\delta_{(s,e)})=
\begin{cases}
B'_t,&\text{if $t\le S+\tau'(s-)$,}\\
e(t-S-\tau'(s-)),&\text{if $S+\tau'(s-)<t\le S+\tau'(s-)+D(e)$},\\
B'_{t+D(e)},&\text{if $S+\tau'(s-)+D(e)<t$},
\end{cases}
\end{align*}
or
\begin{align}\label{e14}
\theta_{S+\tau'(s-)}B'(B^S,\Phi'+\delta_{(s,e)})
=(\theta_{S+\tau'(s-)}B')^-\odot e\odot (\theta_{S+\tau'(s-)+D(e)}B')^+.
\end{align}

After these preparations, we can turn to the calculation of 
the Palm measure of $N$. 
Let $f\colon\Omega\to[0,\infty)$ be measurable. Then
\begin{align*}
\BE_x \sum_{s\in L}f(\theta_sB)\I\{s\in(0,1]\}
&=\BE_x \sum_{s:\tau_{s-}<\tau_s}f(\theta_{\tau_{s-}}B)\I\{\tau_{s-}\in(0,1]\}\\
&=\BE'_x \sum_{s:\tau'(s-)<\tau'(s)}f(\theta_{S+\tau'(s-)}B')\I\{S+\tau'(s-)\in(0,1]\}\\
&=\BE'_x\int f(\theta_{S+\tau'(s-)}B')\I\{S+\tau'(s-)\in(0,1]\}\,\Phi'(ds\times E),
\end{align*}
where we have used \eqref{e12} to get the second identity.
Now we use the independence of $B^S$ and $\Phi'$ along with the Mecke
equation (see e.g.\ \cite[Theorem 4.1]{LastPenrose17}) to obtain that the last expression equals
\begin{align*}
\BE'_x\int_E\int^\infty_0 f(\theta_{S+\tau'(\Phi'+\delta_{(s,e)},s-)}B'(B^S,\Phi'+\delta_{(s,e)})
\I\{S+\tau'(\Phi'+\delta_{(s,e)},s-)\in(0,1]\}\,ds\,\nu(de).
\end{align*}
Clearly, we have $\tau'(\Phi'+\delta_{(s,e)},s-)=\tau'(s-)$. So by \eqref{e14},
the above equals
\begin{align*}
\BE'_x\int_E\int^\infty_0 f((\theta_{S+\tau'(s)}B')^-\odot e\odot (\theta_{S+\tau'(s)}B')^+)
\I\{S+\tau'(s)\in(0,1]\}\,ds\,\nu(de),
\end{align*}
where we have used that $\int\I\{\tau'(s-)\ne \tau'(s)\}\,ds=0$.
Using \eqref{e12} again gives
\begin{align*}
\BE_x \sum_{s\in L}f(\theta_sB)\I\{s\in(0,1]\}
&=\BE_x\int_E\int^\infty_0 f((\theta_{\tau_s}B)^-\odot e\odot (\theta_{\tau_s}B)^+)\I\{\tau_s\in(0,1]\}\,ds\,\nu(de)\\
&=\BE_x\int^\infty_0\int_E f((\theta_tB)^-\odot e\odot (\theta_tB)^+)\I\{t\in(0,1]\}\,\nu(de)\,\ell^0(dt),
\end{align*}
where the second identity comes from a change of variables.
Integrating with respect to $x$ and using \eqref{palmlocaltime}  (for $x=0$) gives
\begin{align*}
\BE \sum_{s\in L}f(\theta_sB)\I\{s\in(0,1]\}
=\BE_0\int f(B^-\odot e\odot B^+)\,\nu(de),
\end{align*}
which is the assertion. 
\end{proof}

\begin{proof}[Proof of Theorem \ref{tpathdeito}]
 Since $\ell^0$ is diffuse and $N_A$ is 
purely discrete, these two random measures are mutually singular.
Moreover, \eqref{palmlocaltime} and Proposition \ref{propPitman}
show that both random measures have intensity $1$.
Proposition \ref{lstt}, \eqref{palmlocaltime}, and Corollary \ref{c12}
imply the assertion.
\end{proof}

For $t\in\R$, let
\begin{align*}
G_t:=\sup\{s\le t:B_s=0\},
\end{align*}
where $\sup \emptyset:=-\infty$. Below we will write
$G_t\equiv G_t(B)$. Also define $D_t:=D_t(B)=\inf\{s>t:B_s=0\}$.
Note that $\BP(G_t=D_t)=0$ for each $t\in\R$.
In particular,
\begin{align}
\BP=\BE \I\{G_0<D_0\}\I\{B\in\cdot\}.
\end{align}
By \cite[Theorem (v)]{Pitman87}, 
\begin{align*}
\BP=\iint^{D_0}_0\I\{\theta_tB\in \cdot\}\,dt\,d\,\BP_N,
\end{align*}
implying that 
\begin{align}\label{e5.16}
\BP(\theta_{G_0}B\in\cdot)=\int \I\{B\in \cdot\}D_0\,d\,\BP_N;
\end{align}
see assertion (iv) of the above cited theorem.
The measure
$$
\nu':=\BP(\epsilon_{G_0}\in\cdot)=\BP(G_0<D_0,\epsilon_{G_0}\in\cdot)
$$
is known as {\em Bismut's excursion measure}. It follows from
\eqref{e5.16} that 
\begin{align}\label{BismutIto}
\nu'(de)=D(e)\,\nu(de).
\end{align}

Let $A\in\mathcal{E}$ satisfy $0<\nu'(A)<\infty$. Similar to \eqref{NA}, 
define an invariant random measure $N'_A$ by
\begin{align}\label{NA1}
  N'_A:=\nu'(A)^{-1}\int \I\{t\in\cdot\}\,\I\{\epsilon_{G_t}\in A\}\,dt. 
\end{align}
It is easy to see that
\begin{align}\label{NA12}
\BP_{N'_A}(B\in\cdot)
=\nu'(A)^{-1}\BP(B\in\cdot,\epsilon_{G_0}\in A).
\end{align}
We then have the following Bismut counterpart of Theorem \ref{tpathdeito}.

\begin{theorem}\label{tpathBismut}
Let $A \in \mathcal{E}$ be such that $0<\nu'(A)<\infty$
and define the random measure $N'_A$ by \eqref{NA1}. Let
\begin{align*}
T:=\inf\{t>0: \ell^0[0,t]\le N'_A[0,t]\}.
\end{align*}
Then $\BP_0(T<\infty)=1$ and $\BP_0(\theta_{G_T} B \in \cdot)
=\BP^-\odot \nu'(\cdot|A) \odot \BP^+$.
\end{theorem}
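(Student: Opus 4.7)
The plan is to mimic the proof of Theorem \ref{tpathdeito}, replacing the point process $N_A$ by the diffuse random measure $N'_A$, and then to translate the Palm shift-coupling identity from the base point $T$ to the left endpoint $G_T$ of the straddling excursion.

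First I would verify the hypotheses of Proposition \ref{lstt} with $\xi=\ell^0$ and $\eta=N'_A$. Both are invariant under the flow. They are mutually singular: $\ell^0$ is supported on $\{t:B_t=0\}$ while $N'_A$, being absolutely continuous with respect to Lebesgue measure with density $\nu'(A)^{-1}\I\{\epsilon_{G_t}\in A\}$, is supported on the union of excursion intervals whose excursion lies in $A$. By \eqref{palmlocaltime}, $\ell^0$ has intensity $1$; the intensity of $N'_A$ is
\[
\BE_\BP N'_A[0,1]=\nu'(A)^{-1}\int_0^1\BP(\epsilon_{G_t}\in A)\,dt=\nu'(A)^{-1}\BP(\epsilon_{G_0}\in A)=1,
\]
by stationarity and the definition of $\nu'$. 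Since $\ell^0$ is diffuse, Proposition \ref{lstt} applies and yields an allocation $\tau$ given by \eqref{matching} (with $\xi=\ell^0$, $\eta=N'_A$) with $\tau(\cdot,0)=T$, satisfying $\BP_{\ell^0}(\theta_T\in\cdot)=\BP_{N'_A}$. Using $\BP_{\ell^0}=\BP_0$ gives $\BP_0(T<\infty)=1$ and
\[
\BP_0(\theta_T B\in\cdot)=\BP_{N'_A}(B\in\cdot).
\]

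Next I would push this forward by $B\mapsto \theta_{G_0(B)}B$. Since $G_0(\theta_sB)=G_s(B)-s$, one has $\theta_{G_T}B=\theta_{G_0(\theta_TB)}(\theta_TB)$, so
\[
\BP_0(\theta_{G_T}B\in\cdot)=\BP_{N'_A}(\theta_{G_0}B\in\cdot).
\]
Unfolding the definition \eqref{Palmmeasure} of the Palm measure, using the identity $\theta_{G_0(\theta_sB)}(\theta_sB)=\theta_{G_s(B)}B$, and invoking the stationarity of $\BP$, I would obtain
\[
\BP_{N'_A}(\theta_{G_0}B\in\cdot)=\nu'(A)^{-1}\BP(\theta_{G_0}B\in\cdot,\ \epsilon_{G_0}\in A),
\]
where the event $\{\epsilon_{G_0}\in A\}$ is now a function of $\theta_{G_0}B$ alone (it concerns the first excursion of $\theta_{G_0}B$).

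Finally I would identify the right-hand side with $\BP^-\odot\nu'(\cdot\mid A)\odot\BP^+$. Starting from \eqref{e5.16} and Proposition \ref{propPitman}, which give $\BP_N=\BP^-\odot\nu\odot\BP^+$ and $D_0$ equals $D(e)$ on the middle factor, one computes
\[
\BP(\theta_{G_0}B\in\cdot)=\iiint \I\{w_1\odot e\odot w_2\in\cdot\}\,D(e)\,\BP^-(dw_1)\nu(de)\BP^+(dw_2)=\BP^-\odot\nu'\odot\BP^+,
\]
by the defining relation \eqref{BismutIto}. Restricting the middle factor to $A$ and normalising by $\nu'(A)$ yields the claimed decomposition. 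The main (mild) obstacle is the bookkeeping of the shift $\theta_{G_0}$ in the Palm formula and the careful use of stationarity; once done, the Bismut identity \eqref{BismutIto} makes the conversion from the It\^o-type statement (Theorem \ref{tpathdeito}) to the Bismut-type statement essentially automatic.
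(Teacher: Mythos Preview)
Your proposal is correct and follows essentially the same route as the paper's proof: apply Proposition \ref{lstt} with $\xi=\ell^0$ and $\eta=N'_A$ to obtain $\BP_0(\theta_T B\in\cdot)=\BP_{N'_A}$, push forward by $\theta_{G_0}$ using $G_t=G_0(\theta_tB)+t$, and identify $\BP_{N'_A}(\theta_{G_0}B\in\cdot)$ via \eqref{NA12}, \eqref{e5.16}, Proposition \ref{propPitman} and \eqref{BismutIto}. The only cosmetic difference is that you spell out the intensity and singularity verifications and rederive \eqref{NA12} from the Palm definition, whereas the paper simply quotes it.
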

\begin{proof} Since $\ell^0$ and Lebesgue measure are mutually singular,
we can apply Theorem \ref{LMT5.1} and Proposition \ref{lstt}.
This gives
$$
\BP_0(\theta_T B\in\cdot)=\BP_{N'_A}(B\in\cdot).
$$
Since $G_t=G_0(\theta_tB)+t$, $t\in\R$, we hence
have for each measurable $f\colon\Omega\to[0,\infty)$,
\begin{align*}
\BE_0 f(\theta_{G_T} B)
&=\BE_0 f(\theta_{G_0(\theta_TB)} \theta_TB)=\BE_{N'_A} f(\theta_{G_0}B).
\end{align*}
It remains to show that
\begin{align}\label{e6.12}
\BP_{N'_A}(\theta_{G_0} B\in\cdot)=\BP^-\odot \nu'(\cdot|A) \odot \BP^+.
\end{align}

Equation \eqref{NA12} implies that 
\begin{align*}
\BE_{N'_A} f(\theta_{G_0} B)
=\nu'(A)^{-1}\BE f(\theta_{G_0}B)\I\{\epsilon_0(\theta_{G_0}B)\in A\}.
\end{align*}
Since $\epsilon_{G_0}=\epsilon_0(\theta_{G_0}B)$, we can use  
\eqref{e5.16} to obtain that
\begin{align*}
\BE_{N'_A} f(\theta_{G_0} B)=\nu'(A)^{-1}\BE_{N}  f(B)D_0\I\{\epsilon_0\in A\}.
\end{align*}
By Proposition \ref{propPitman} this equals
\begin{align*}
\nu'(A)^{-1}\iiint f(w_1\odot w_2\odot w_3)D(w_2)\I\{w_2\in A\}
\,\BP^-(dw_1)\,\nu(dw_2)\,\BP^+(dw_3).
\end{align*}
Hence \eqref{BismutIto} shows that \eqref{e6.12} holds, as required to conclude
the proof.
\end{proof}

\bigskip

\begin{remark}\label{r12}\rm
The identity \eqref{e6.12}, Corollary \ref{c12} and \eqref{BismutIto} show that
the Palm measure of $N'_A$ is (up to a simple shift) a length-biased
version of that of $N_A$.
\end{remark}

\bigskip

\begin{remark}\label{r13}\rm Let $A \in \mathcal{E}$ be such that 
$0<\nu(A)<\infty$ and $0<\nu'(A)<\infty$. For $u\in[0,1]$ let
\begin{align*}
T^u:=\inf\{t>0:uN_A\{0\}+N_A(s,t)\le N'_A[0,t]\}.
\end{align*}
It follows from 
Proposition \ref{lstt} that
\begin{align*}
\BE_{N_A}\int^1_0 \I\{\theta_{T^u}B\in\cdot\}\,du=\BP_{N'_A}.
\end{align*}
Since $N_A$ is purely discrete, the proof of this randomized shift-coupling (see Remark \ref{U})
requires Theorem \ref{tmain1}. Theorem \ref{tmain2} would not be enough.
\end{remark}

\bigskip
\noindent
{\bf Acknowledgments:} We would like to thank Jim Pitman for some
illuminating discussions of the topics in this paper.
The first author thanks Steve Evans for supporting his visit
to Berkeley and for giving valuable advice on
some aspects of this work. We also thank the referees for
their helpful comments and advice.

\end{document}